\begin{document}

\begin{doublespace}

\newtheorem{theorem}{Theorem}
\newtheorem{prop}[theorem]{Proposition}
\newtheorem{lemma}[theorem]{Lemma}
\newtheorem{defi}[theorem]{Definition}
\newtheorem{corol}[theorem]{Corollary}
\newtheorem{conjec}[theorem]{Conjecture}
\newtheorem{claim}[theorem]{Claim}
\newtheorem{remark}[theorem]{Remark}

\renewcommand{\qedsymbol}{$\blacksquare$}

\newtheorem{thm}{Theorem}[section]
\newtheorem{defn}{Definition}[section]
\newtheorem{corollary}[thm]{Corollary}
\newtheorem{example}[thm]{Example}
\numberwithin{equation}{section}

\newcommand{\D}{{\rm d}}
\def\ee{\varepsilon}
\def\qed{{\hfill $\Box$ \bigskip}}
\def\MM{{\cal M}}
\def\BB{{\cal B}}
\def\LL{{\cal L}}
\def\FF{{\cal F}}
\def\EE{{\cal E}}
\def\QQ{{\cal Q}}

\def\R{{\mathbb R}}
\def\L{{\bf L}}
\def\E{{\mathbb E}}
\def\F{{\bf F}}
\def\P{{\mathbb P}}
\def\N{{\mathbb N}}
\def\eps{\varepsilon}
\def\wh{\widehat}
\def\pf{\noindent{\bf Proof.} }

\title{\Large \bf Optimal control with absolutely continuous strategies for spectrally negative L\'evy processes\thanks{
R.L. gratefully acknowledges support from the AXA Research Fund. J-L.P. acknowledges financial support from CONACyT grant number 000000000129326.
}}
\author{Andreas E. Kyprianou\thanks{Department of Mathematical
Sciences, University of Bath, Claverton Down, Bath, BA2 7AY, U.K.
E-mail: a.kyprianou@bath.ac.uk}, \, Ronnie Loeffen\thanks{	Weierstrass Institute for Applied Analysis and Stochastics,
Mohrenstrasse 39,
10117 Berlin
Germany. E-mail: loeffen@wias-berlin.de} \, and Jos\'e-Luis P\'erez\thanks{Department of Mathematical
Sciences, University of Bath, Claverton Down, Bath, BA2 7AY, U.K.
E-mail: jlapg20@bath.ac.uk}}
\date{\today}

\maketitle

\begin{abstract}
In the last few years there has been renewed interest in the classical
control problem of de Finetti \cite{definetti} for the case that
underlying source of randomness is a spectrally negative L\'evy process.
In particular a significant step forward is made in
\cite{loeffen1} where it is shown that a natural and very general condition
on the underlying L\'evy process which allows one to proceed with the
analysis of the associated Hamilton-Jacobi-Bellman equation is that its L\'evy
measure is absolutely continuous, having completely monotone density.

In this paper we consider
 de Finetti's control problem
 but now with the restriction that control strategies
are absolutely continuous with respect to Lebesgue measure.
This problem has been considered  by Asmussen and Taksar
\cite{asmussentaksar}, Jeanblanc and Shiryaev \cite{jeanblanc}
and Boguslavskaya \cite{bog} in the diffusive case and Gerber
and Shiu \citelist{\cite{gerbershiucompound}}
for the case of a Cram\'er-Lundberg process with exponentially
distributed jumps.  We show the robustness of the condition
that the underlying L\'evy measure has a completely monotone
density and establish an explicit optimal strategy for this
case that envelopes the aforementioned existing results.
The explicit optimal strategy in question is the so-called
refraction strategy.
\end{abstract}

\noindent {\bf AMS 2000 Mathematics Subject Classification}: Primary
60J99; secondary 93E20, 60G51.

\noindent {\bf Keywords and phrases:}
Scale functions, ruin problem, de Finetti dividend problem, complete monotonicity.

\section{Introduction and main result}\label{introduction}
Recently there  there has been a growing body of literature
 which explores the interaction of
classical models of ruin and fluctuation theory of L\'evy processes; see for example
\cites{palmdiv, DK2006, huzak, huzak2, KKM2004,  kyppalm, kypriverosong,
loeffen1, loeffen2, loeffen3, loeffenrenaud, biffiskyp}.
Of particular note in this respect is the application of the theory of scale functions
for spectrally negative L\'evy processes. This article adds to the aforementioned list by addressing a modification of de Finetti's classical dividend problem through the theory of scale functions. Before turning to our main results, let us first attend to the basic definitions of the mathematical objects that we are predominantly interested in.

Recall that a spectrally negative L\'evy process is a stochastic process issued from the
origin which has c\`adl\`ag
paths and stationary and independent
increments
such that there are no positive discontinuities. To avoid degenerate cases in the forthcoming discussion, we shall additionally exclude from this definition the case of monotone paths. This means that we are not interested in the case of a deterministic increasing linear drift or the negative of a subordinator.

Henceforth we assume that $X=\{X_t: t\ge 0\}$ is a spectrally negative
L\'evy process under $\mathbb{P}$ with L\'evy triplet given by $(\gamma, \sigma, \nu)$,
where $\gamma \in \R$, $\sigma\ge 0$ and $\nu$ is a measure concentrated on
$(0,\infty)$ satisfying
$$
\int_{(0,\infty)}(1\wedge z^2)\nu({\rm d}z)<\infty.
$$
The Laplace exponent of $X$ is given by
\begin{equation*}
\psi(\lambda)= \log \mathbb{E} \left[ \mathrm{e}^{\lambda X_1} \right] = \gamma \lambda +\frac12\sigma^2\lambda^2 -
\int_{(0,\infty)} \left( 1-\mathrm{e}^{-\lambda z} -\lambda z\mathbf{1}_{\{0<z\leq1\}} \right)\nu(\mathrm{d}z),
\end{equation*}
which is well defined for $\lambda\geq0$. Here $\mathbb{E}$ denotes expectation with respect to $\mathbb{P}$.
The reader will note that, for convenience, we have arranged the
representation of the Laplace exponent in such a way that the
support of the L\'evy measure is positive even though the process
experiences only negative jumps.
As a strong Markov process we shall endow $X$ with probabilities
$\{\mathbb{P}_x : x\in\mathbb{R}\}$ such that under $\mathbb{P}_x$
we have $X_0 = x$ with probability one. Note that $\mathbb{P}_0= \mathbb{P}$.

It is well-known that $X$ has paths of bounded variation if and only if $\sigma=0$ and $\int_0^1 z\nu(\mathrm{d}z)<\infty$. In this case $X$ can be written as
\begin{equation}
X_t=ct-S_t, \,\, t\geq 0,
\label{BVSNLP}
\end{equation}
where $c=\gamma+\int_0^1z\nu(\mathrm{d}z)$ and $\{S_t:t\geq0\}$ is a driftless subordinator. Note that  necessarily $c>0$, since we have ruled out the case that $X$ has monotone paths.
Moreover, when $\nu(0,\infty)<\infty$, then $X$ is known in the actuarial mathematics literature as the classical Cram\'er-Lundberg risk process. This processs is often used to  model the surplus wealth of an insurance company.

The classical theory of ruin concerns itself  with the path of the stochastic risk process until the moment that it first passes below the level zero; the event corresponding to ruin. An offshoot of the classical ruin problem was introduced by de Finetti \cite{definetti}.
His intention was
to make the study of ruin more
realistic by introducing the possibility that dividends are paid out
to shareholders up to the moment of ruin.
Further, the payment of
dividends should be made in such a way as to optimize the expected
net present value of the total dividends paid to the shareholders
from time zero until ruin. Mathematically speaking, de Finetti's
dividend problem amounts to solving a control problem which we state
in the next paragraph. Although de Finetti's  dividend problem has its origin in insurance mathematics, there are several papers \citelist{\cite{bayraktar} \cite{belhaj} \cite{radner}} that have considered this problem into the context of corporate finance.

 Let $\pi=\{L^\pi_t: t\geq 0\}$ be
a dividend strategy, meaning that it is a left-continuous non-negative
non-decreasing process adapted to the (completed and right
continuous) filtration $\mathbb{F}:=\{\mathcal{F}_t : t\geq 0\}$ of $X$. The
quantity $L^\pi_t$ thus represents the cumulative dividends paid out
up to time $t$ by the insurance company whose risk process is
modelled by $X$. An additional constraint on  $\pi$ is that  $L^\pi_{t+} - L^\pi_t\leq \max\{U^\pi_t, 0\}$ for $t\leq \sigma^\pi$ (i.e. lump sum dividend payments are always smaller than the available reserves at any time before ruin). The $\pi$-controlled L\'evy process is thus $U^\pi=\{U^\pi_t :
t\geq 0\}$ where $U^\pi_t = X_t - L^\pi_t$. Write $\sigma^\pi
=\inf\{t>0: U^\pi_t < 0 \}$ for the time at which ruin occurs when
the dividend payments are taken into account. 
Suppose that  $\Pi$ denotes some family of admissible strategies, which we shall elaborate on later. Then the expected net present
value  of the dividend policy $\pi\in\Pi$
with discounting at rate $q>0$ and initial capital $x\geq 0$ is given by
$$
v_\pi(x) = \mathbb{E}_x\left[\int_{[0,{\sigma^\pi}]} \mathrm{e}^{-qt}
\mathrm{d}L_t^\pi\right],
$$
where $\mathbb{E}_x$ denotes expectation with respect to
$\mathbb{P}_x$ and $q>0$ is a fixed rate. {\it De Finetti's dividend
problem} consists of characterising the optimal value function,
\begin{equation}
\label{controlproblem}
v_*(x):= \sup_{\pi\in\Pi} v_\pi(x),
\end{equation}
and, further, if it exists, establish a strategy $\pi^*$ such that
$v_*(x) = v_{\pi^*}(x)$.

In the case that $\Pi$ consists of all  strategies as described at the beginning of the previous paragraph  there are now extensive results in the literature, most of which have appeared in the last few years.
Initially this problem was considered  by Gerber \cite{gerber69} who proved that,
for the Cram\'{e}r-Lundberg model with exponentially distributed
jumps, the optimal value function is the result of a {\it reflection
strategy}. That is to say, a strategy of the form $L^a_t =
a\vee\overline{X}_t - a$ for some  optimal $a\geq 0$ where $\overline{X}_t
:=\sup_{s\leq t}X_s$. In that case the controlled process $U^{a}_t =
X_t - L^a_t$ is a spectrally negative L\'evy process reflected at
the barrier $a$. However, a sequence of innovative works \cites{palmdiv, azcue, loeffen1, loeffen2, kypriverosong, loeffenrenaud}
have pushed this conclusion much further into the considerably more general setting where $X$ is a spectrally negative L\'evy process. Of particular note amongst these references is the paper of Loeffen \cite{loeffen1} in which the optimality of the reflection strategy is shown to depend in a very subtle way on the shape of the so-called {\it scale functions} associated to the underlying L\'evy process. Indeed Loeffen's new perspective on de Finetti's control problem lead
to very easily verifiable sufficient conditions for the reflection strategy to be optimal. Loeffen shows that it suffices for the L\'evy measure $\nu$ to be absolutely continuous with a completely monotone density, thereby allowing for a very large family of general spectrally negative L\'evy processes. Through largely technical adaptations of Loeffen's method, this sufficient condition was relaxed in \cites{kypriverosong, loeffenrenaud}. It is important to note that in general a barrier strategy  is not  always an optimal strategy; an explicit counter-example was  provided by Azcue and Muler \cite{azcue}.

In this article we are interested in addressing an adaptation of de Finetti's dividend problem by considering a smaller class of admissible strategies. Specifically, we are interested in the case that, in addition to the assumption that strategies are non-decreasing and $\mathcal{F}$-adapted, $\Pi$ only admits absolutely continuous strategies $\pi=\{L_t^\pi:t\geq0\}$ such that
\begin{equation*}
L_t^\pi=\int_0^t \ell^\pi(s)\mathrm{d}s,
\end{equation*}
and  for $t\geq0$, $\ell^\pi(t)$ satisfies
\begin{equation*}
0\leq \ell^\pi(t) \leq \delta,
\end{equation*}
where $\delta>0$ is a ceiling rate.  Moreover, we make the assumption that
\begin{equation}
\label{assumptionH}
\delta<\gamma+\int_0^1z\nu(\mathrm{d}z) \quad \text{if $X$ has paths of bounded variation.} \tag{\textbf{H}}
\end{equation}
Note that for a reflection strategy, when $X$ has paths of unbounded variation the corresponding dividend process is supported by increase times that are  singular with respect to Lebesgue measure  and when $X$ has paths of bounded variation, the dividend process is supported by increase times which are absolutely continuous with respect to Lebesgue measure  with rate $c$ (cf. Section 6.1 of \cite{kypbook}). Recalling the decomposition (\ref{BVSNLP}) and assumption \eqref{assumptionH}, we see that the reflection strategy is therefore not included into the smaller class of admissible controls and we are left with a truly different control problem. A particular motivation for studying this kind of modification is that, if a reflection strategy is applied, then the company will get ruined in finite time with probability one, which is seen as an undesired consequence. By restricting the set of admissible strategies in the way described above and provided $\mathbb{E}[X_1]>\delta$, we make 
 sure that there is a strictly positive probability that ruin will never occur no matter which admissible dividend strategy is applied.


The reader familiar with optimal control problems of this kind, will recognize that the optimal strategy should be of bang-bang type, i.e. depending on the value of the controlled process, dividends should either be paid out at the maximum rate $\delta$ or at the minimum rate $0$. A particularly simple bang-bang strategy is the one that we refer to here as a {\it refraction strategy}, which, in words, is the strategy where dividends are paid out at the maximum rate when the controlled process is above a certain level $b\geq0$ and at the minimum rate when below $b$. Mathematically, a refraction strategy at $b$ is the strategy which corresponds to the controlled process taking the form of the unique strong solution to the following stochastic differential equation,
\begin{equation}
\label{SDE}
\mathrm{d}U^b_t= \mathrm{d}X_t - \delta\mathbf{1}_{\{U^b_t>b\}}{\rm d}t,\,\, t\geq 0.
\end{equation}
In the case of absolutely continuous control strategies for $X$, it has been shown by Asmussen and Taksar
\cite{asmussentaksar}, Jeanblanc and Shiryaev \cite{jeanblanc}
and Boguslavskaya \cite{bog} in the diffusive case and by Gerber
and Shiu \cite{gerbershiucompound}
for the case of a Cram\'er-Lundberg process with exponentially
distributed jumps that a refraction strategy, where $b\geq 0$ is optimally chosen, is optimal. This particular control problem is also discussed in the review papers of Avanzi \cite{avanzi} and Albrecher and Thonhauser \cite{albrecherthonhauseroverview} and in the book of Schmidli \cite{schmidlibook}.


In the spirit of earlier work for the more general class of admissible strategies, the point of view we shall take here is to deal with a general spectrally negative L\'evy process and give sufficient conditions under which a refraction strategy of the form (\ref{SDE}) is optimal. Note that when $X$ is a general spectrally negative L\'evy process, the strong existence and uniqueness of solutions to (\ref{SDE})  under (H), so called {\it refracted L\'evy processes}, were established in Kyprianou and Loeffen \cite{kyploeffen}. Our main result is the following.

\begin{theorem}\label{main}
Suppose the L\'evy measure  has a completely monotone density. Then an optimal strategy for the control problem is formed by a refraction strategy.
\end{theorem}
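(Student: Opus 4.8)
The plan is to follow the classical guess-and-verify scheme organised around the Hamilton--Jacobi--Bellman (HJB) equation, with the theory of scale functions supplying the explicit candidate. Writing $\mathcal{A}$ for the infinitesimal generator of $X$, the dynamic programming heuristic suggests that the value function should solve
\begin{equation*}
\max_{0\le \ell\le \delta}\big\{(\mathcal{A}-q)v(x)+\ell\,(1-v'(x))\big\}=0,\qquad x\ge 0,
\end{equation*}
subject to $v(x)=0$ for $x<0$. Since the bracket is affine in the control $\ell$, its pointwise maximiser is $\ell=\delta$ when $v'(x)<1$ and $\ell=0$ when $v'(x)>1$. This bang--bang structure is exactly what a refraction strategy implements, provided the set $\{v'>1\}$ is an interval $[0,b^*)$. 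First I would therefore take as candidate the value function $v_b$ of the refraction strategy at a level $b$ and seek the correct threshold $b^*$.

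Next I would compute $v_b$ explicitly. Using the fluctuation identities for refracted L\'evy processes established in \cite{kyploeffen}, together with the expected-discounted-dividend representation, one obtains a closed form for $v_b$ in terms of the $q$-scale function $W^{(q)}$ of $X$ and the $q$-scale function $\mathbb{W}^{(q)}$ of the process $Y_t=X_t-\delta t$. On $[0,b]$ the controlled process behaves like $X$ and on $(b,\infty)$ like $Y$, so $v_b$ is assembled by pasting the two scale-function expressions across the barrier. The threshold $b^*$ is then pinned down by the appropriate fit condition at $b$ --- smooth fit ($v_b\in C^1$) when $X$ has unbounded variation and continuous fit when $X$ has bounded variation --- which, through the scale-function formula, reduces to an equation for $b^*$ equivalent to $v'_{b^*}(b^*)=1$, consistent with the switching level read off from the HJB equation.

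The decisive, and hardest, step is the global shape analysis: I must show that $v'_{b^*}(x)\ge 1$ for $0\le x\le b^*$ and $v'_{b^*}(x)\le 1$ for $x\ge b^*$, with equality only at $b^*$. Since $v_{b^*}$ solves $(\mathcal{A}-q)v_{b^*}=0$ on $(0,b^*)$ and $(\mathcal{A}-q)v_{b^*}+\delta(1-v'_{b^*})=0$ on $(b^*,\infty)$ by construction, this single sign condition is precisely what upgrades those identities to the full HJB inequality in each region. This is where the hypothesis that the L\'evy density is completely monotone enters, in the spirit of Loeffen's insight: complete monotonicity forces strong convexity and monotonicity properties of $W^{(q)}$ and $\mathbb{W}^{(q)}$ which propagate to $v'_{b^*}$ and guarantee that $v'_{b^*}$ meets the level $1$ exactly once, from above. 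I expect establishing this monotonicity of $v'_{b^*}-1$ to be the main technical obstacle, requiring the integral representation of the scale function furnished by complete monotonicity rather than any soft argument.

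Finally I would prove a verification lemma: if a sufficiently regular $w\ge 0$ satisfying the HJB inequality can be exhibited, then $w\ge v_\pi$ for every admissible $\pi$. The argument applies the It\^o--Dynkin formula to $\mathrm{e}^{-qt}w(U^\pi_t)$ for an arbitrary admissible strategy $\pi$; the drift equals $\mathrm{e}^{-qt}\big[(\mathcal{A}-q)w-\ell^\pi w'\big](U^\pi_t)$, so adding the discounted accumulated dividends $\int_0^t\mathrm{e}^{-qs}\,\mathrm{d}L^\pi_s$ produces a process whose drift is $\mathrm{e}^{-qt}\big[(\mathcal{A}-q)w+\ell^\pi(1-w')\big](U^\pi_t)\le 0$ by the HJB inequality, hence a supermartingale. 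Taking expectations under $\mathbb{P}_x$ and letting $t\to\infty$, while controlling the boundary term at ruin, yields $w(x)\ge v_\pi(x)$. Taking $w=v_{b^*}$ turns every inequality into an equality along the refraction strategy, giving $v_*=v_{b^*}$ and the asserted optimality. A subtle point to verify here is the regularity of $v_{b^*}$ needed to justify the It\^o formula for a jump process when the value function is only $C^1$ in the bounded-variation case; this is handled, as is standard, by the smoothing of the Gaussian component when $\sigma>0$ and by a direct argument in the bounded-variation case.
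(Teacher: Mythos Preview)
Your proposal is correct and follows essentially the same route as the paper: compute $v_b$ via the scale-function identities of \cite{kyploeffen}, select the threshold $b^*$, reduce the HJB inequality to the single shape condition $v'_{b^*}\ge 1$ on $(0,b^*]$ and $v'_{b^*}\le 1$ on $(b^*,\infty)$, verify this condition using the completely monotone structure of the scale functions (the paper's Lemma~\ref{cruciallemma}, which indeed relies on Bernstein's representation, exactly as you anticipate), and conclude via a verification lemma. One small calibration: the paper pins down $b^*$ not by a pasting/fit condition but as the unique minimiser of $h(b)=\varphi(q)\int_0^\infty \mathrm{e}^{-\varphi(q)u}W^{(q)\prime}(u+b)\,\mathrm{d}u$, which automatically gives $h(b^*)=W^{(q)\prime}(b^*)$ (hence $v'_{b^*}(b^*)=1$) when $b^*>0$ and cleanly handles the boundary case $b^*=0$; your smooth/continuous-fit labelling is slightly off (in the refracted setting the relevant fit is $C^1$ in bounded variation and $C^2$ when $\sigma>0$), but the operative condition $v'_{b^*}(b^*)=1$ you extract is the right one.
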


The above theorem offers the same sufficient condition on the L\'evy measure as Loeffen \cite{loeffen1} for the larger,  general class of admissible strategies.
Although, as alluded to above, weaker assumptions have been established in that case, the technical details of our method appears to not to  allow us to follow suit. To illustrate the difference between the two cases, we give in Remark \ref{counterexample} a specific example of $X$ and $q$ for which no refraction strategy can be optimal in the restricted case for a certain choice of the ceiling rate $\delta$,  whereas a reflection strategy is  optimal within the general class of admissible dividend strategies.
We also remark that in fact our method allows us to give a more quantitative result than Theorem \ref{main} in the sense that we are able to characterise the threshold $b^*$ associated with the optimal refraction strategy. As some more notation is needed to do this,  it is given at the end of the paper in Corollary \ref{corol}.

\bigskip

We close this section with a brief summary of the remainder of the
paper. In the next section we show the role played by scale functions in giving a workable identity for the expected net present value of a refraction strategy. We also use this identity to describe an appropriate candidate for the threshold associated with the optimal refraction strategy. Then in the final section we put together a series of technical lemmas which allow us to verify the optimality of the identified threshold strategy. The assumption that $\nu$ has a completely monotone density will repeatedly play a very significant role in the aforementioned lemmas.

\section{Scale functions and refraction strategies}

As alluded to above, a key element of the forthcoming analysis relies on the theory of so-called scale functions. We therefore devote some time in this section reminding the reader of some fundamental properties of scale functions as well as their relevance to refraction strategies.

For
each $q\geq0$ the so called $q$-scale function of $X$, $W^{(q)}:
\R\to [0, \infty),$ is the unique function such that $W^{(q)}(x)=0$
for $x<0$ and on $[0, \infty)$ is a strictly increasing and
continuous function whose Laplace transform is given by
\begin{equation}
\int^{\infty}_0\mathrm{e}^{-\theta x}W^{(q)}(x)dx=\frac1{\psi(\theta)-q},
\quad \theta>\Phi(q).
\label{scalefn}
\end{equation}
Here
\[
 \Phi(q) = \sup\{\lambda \geq 0: \psi(\lambda) = q\}
     \]
and is well defined and finite for all $q\geq 0$ as a consequence of the well known fact that $\psi$ is a strictly convex function satisfying $\psi(0) = 0$ and $\psi(\infty) = \infty$.

Shape and smoothness properties of the scale functions $W^{(q)}$ will be of particular interest to us in the forthcoming analysis.
In the discussion below we shall consider the behaviour of $W^{(q)}$ at $0$, $\infty$ as well as describing qualitative features of its shape on $(0,\infty)$.
We start with  some standard facts concerning the behaviour of the scale function in the neighbourhood of the origin. Recall that we have defined the constant
\[
  c= \gamma+\int_0^1 z \, \nu(\mathrm{d}z)
\]
in the case that $X$ has bounded variation paths.

The following result is well known and can easily be deduced from (\ref{scalefn}). See for example Chapter 8 of \cite{kypbook}.
\begin{lemma}\label{prop:Wq0}
As $x\downarrow 0$, the value of the scale function $W^{(q)}(x)$ and its right derivative are
determined for every $q\geq 0$ as follows
\begin{equation}\label{initialvalues}
\begin{split}
W^{(q)}(0+)= &
\begin{cases}
1/c & \text{when $\sigma=0$ and $\int_0^1 z \, \nu(\mathrm{d}z)<\infty$},  \\
0 & \text{otherwise},
\end{cases}\\
W^{(q)\prime}(0+)=&
\begin{cases}
2/\sigma^2 & \text{when $\sigma>0$,} \\
(\nu(0,\infty)+q)/c^2 & \text{when $\sigma=0$ and $\nu(0,\infty)<\infty$,} \\
\infty & \text{otherwise.}
\end{cases}
\end{split}
\end{equation}
\end{lemma}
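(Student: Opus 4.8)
The plan is to read off both boundary quantities directly from the Laplace transform characterisation (\ref{scalefn}) by means of the classical initial-value theorem for Laplace transforms, thereby reducing the whole lemma to the asymptotic behaviour of $\psi(\theta)$ as $\theta\to\infty$. Since $W^{(q)}$ vanishes on $(-\infty,0)$ and is non-negative and non-decreasing on $[0,\infty)$, the right limit $W^{(q)}(0+)$ exists in $[0,\infty)$; likewise, the monotonicity of $W^{(q)}$ together with standard regularity of scale functions (Chapter 8 of \cite{kypbook}) ensures that the right derivative $W^{(q)\prime}(0+)$ exists as an element of $[0,\infty]$. Once these one-sided limits are known a priori to exist, the initial-value theorem identifies them as
\begin{equation*}
W^{(q)}(0+)=\lim_{\theta\to\infty}\frac{\theta}{\psi(\theta)-q},\qquad
W^{(q)\prime}(0+)=\lim_{\theta\to\infty}\theta\left(\frac{\theta}{\psi(\theta)-q}-W^{(q)}(0+)\right),
\end{equation*}
the second formula arising by applying the first to $W^{(q)\prime}$, whose Laplace transform on $(0,\infty)$ is $\theta/(\psi(\theta)-q)-W^{(q)}(0+)$.

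For the value at the origin I would analyse $\psi(\theta)/\theta$. When $\sigma>0$ the Gaussian term dominates, $\psi(\theta)\sim\tfrac12\sigma^2\theta^2$, so $\theta/(\psi(\theta)-q)\to0$ and $W^{(q)}(0+)=0$. When $\sigma=0$ and $\int_0^1 z\,\nu(\D z)<\infty$, I would rewrite the exponent using $c=\gamma+\int_0^1 z\,\nu(\D z)$ as $\psi(\theta)=c\theta-\int_{(0,\infty)}(1-\mathrm{e}^{-\theta z})\nu(\D z)$; dominated convergence (with dominating function $z$) gives $\theta^{-1}\int_{(0,\infty)}(1-\mathrm{e}^{-\theta z})\nu(\D z)\to0$, so $\psi(\theta)\sim c\theta$ and $W^{(q)}(0+)=1/c$. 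In the remaining unbounded-variation case with $\sigma=0$ one uses the standard fact that $\psi(\theta)/\theta\to\infty$, whence the ratio tends to $0$.

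For the derivative the unifying estimate is $\psi(\theta)/\theta^2\to\tfrac12\sigma^2$, which I would prove by splitting the Lévy integral at $z=1$ and applying dominated convergence using $|1-\mathrm{e}^{-u}-u|\le\tfrac12u^2$ together with $\int_{(0,1]}z^2\,\nu(\D z)<\infty$ and $\nu(1,\infty)<\infty$. When $\sigma>0$ this gives $W^{(q)\prime}(0+)=\lim_{\theta\to\infty}\theta^2/(\psi(\theta)-q)=2/\sigma^2$. When $\sigma=0$ and $\nu(0,\infty)<\infty$, dominated convergence yields $c\theta-\psi(\theta)=\int_{(0,\infty)}(1-\mathrm{e}^{-\theta z})\nu(\D z)\to\nu(0,\infty)$, and then $\theta\left(\frac{\theta}{\psi(\theta)-q}-\frac1c\right)=\frac{\theta\,(c\theta-\psi(\theta)+q)}{c\,(\psi(\theta)-q)}\to\frac{\nu(0,\infty)+q}{c^2}$. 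In the two ``otherwise'' cases these same expressions diverge: for unbounded variation, $W^{(q)}(0+)=0$ and $\psi(\theta)=o(\theta^2)$ force $\theta^2/(\psi(\theta)-q)\to+\infty$, while for bounded variation with $\nu(0,\infty)=\infty$ the quantity $c\theta-\psi(\theta)=\int_{(0,\infty)}(1-\mathrm{e}^{-\theta z})\nu(\D z)\to\infty$ (monotone convergence) drives the ratio to $+\infty$.

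The main obstacle, and the only genuinely delicate point, is the rigorous passage between the behaviour of the Laplace transform at $\infty$ and that of $W^{(q)}$ at $0$. I would sidestep the full Tauberian machinery by exploiting the monotonicity of $W^{(q)}$ and the known regularity of scale functions to guarantee that the relevant right limits exist beforehand, so that only the elementary Abelian initial-value theorem is needed to evaluate them. Some care is also required in the divergent cases to conclude that $W^{(q)\prime}(0+)=+\infty$ rather than merely that the candidate limit is infinite; this again follows from monotonicity, since a finite right derivative would contradict the divergence of $\theta\bigl(\theta/(\psi(\theta)-q)-W^{(q)}(0+)\bigr)$.
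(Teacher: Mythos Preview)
Your argument is correct and is precisely the route the paper indicates: the paper gives no proof at all, merely stating that the result ``is well known and can easily be deduced from (\ref{scalefn})'' with a reference to Chapter~8 of \cite{kypbook}. You have simply written out that deduction in full, via the initial-value theorem and the large-$\theta$ asymptotics of $\psi$, which is exactly the standard approach in the cited reference.
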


 In general it is know that one may always write for $q\geq 0$
\begin{equation}
 W^{(q)}(x) = \mathrm{e}^{\Phi(q)x}W_{\Phi(q)}(x),
 \label{exp-growth}
\end{equation}
where $W_{\Phi(q)}$ plays the role of a $0$-scale function of an auxilliary spectrally negative L\'evy process with Laplace exponent given by $\psi_{\Phi(q)}(\lambda) = \psi(\lambda + \Phi(q)) - q$. Note the fact that $\psi_{\Phi(q)}$ is the Laplace exponent follows by an exponential tilting argument, see for example Chapter 8 of Kyprianou \cite{kypbook}. In the same reference one also sees that $\lim_{x\uparrow\infty}W_{\Phi(q)}(x)<1/\psi'(\Phi(q))<\infty$, which suggests that, when $q>0$, the function $W^{(q)}(x)$ behaves like the exponential function $\mathrm{e}^{\Phi(q)x}$ for large $x$. It is therefore natural to ask whether $W^{(q)}(x)$ is convex for large values of $x$. This very question was addressed in Loeffen \cites{loeffen1, loeffen2}. In these papers it was found that, due to quite a deep connection between scale functions and potential measures of subordinators, a natural assumption which allows one to address the issue of convexity, and, in fact, say a lot more
 ,
  is that the L\'evy measure $\nu$ is absolutely continuous with completely monotone density.  In the next lemma we collect a number of consequences of this assumption, lifted from the aforementioned two papers. We need first some more notation. Recalling that $W^{(q)}$ is continuously differentiable on $(0,\infty)$ as soon as  $\nu$ has no atoms (see for example the discussion in \cite{chankypsavov}), a key quantity in the lemma is the constant
\[
 a^* = \sup\{a\geq 0: W^{(q)\prime}(a) \leq W^{(q)\prime}(x) \text{ for all } x\geq 0\}.
\]
Note that we understand $W^{(q)\prime}(0)$ to mean $W^{(q)\prime}(0+)$ above and necessarily $a^*<\infty$ since, by (\ref{exp-growth}), we have that $\lim_{x\uparrow\infty} W^{(q)\prime}(x) =\infty$.

\begin{lemma}
 \label{compmon}
Suppose the L\'evy measure has a completely monotone density and $q>0$. Then
the $q$-scale function can be written as
\begin{equation*}
W^{(q)}(x)=\Phi'(q) \mathrm{e}^{\Phi(q)x} -f(x), \quad x>0,
\end{equation*}
where $f$ is a non-negative, completely monotone function.
Moreover, $W^{(q)\prime}$ is strictly log-convex  (and hence convex) on $(0,\infty)$. Since $W^{(q)\prime}(\infty) = \infty$, $a^*$ is thus the unique point at which $W^{(q)\prime}$ attains its minimum so that  $W^{(q)\prime}$ is strictly decreasing on $(0,a^*)$ and strictly increasing on $(a^*,\infty)$.
\end{lemma}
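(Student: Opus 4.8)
The plan is to obtain the representation by identifying $f$ through its Laplace transform and invoking Bernstein's theorem, and then to deduce the shape statements as formal consequences. Recall that a function on $(0,\infty)$ is completely monotone precisely when it is the Laplace transform of a positive measure, and that this holds for $f$ exactly when its own Laplace transform $\hat f(\theta)=\int_0^\infty e^{-\theta x}f(x)\,\mathrm{d}x$ is a Stieltjes function, i.e.\ of the form $a/\theta+b+\int_{(0,\infty)}(\theta+t)^{-1}\rho(\mathrm{d}t)$ with $a,b\ge0$ and $\rho\ge0$. Since $\int_0^\infty e^{-\theta x}\Phi'(q)e^{\Phi(q)x}\,\mathrm{d}x=\Phi'(q)/(\theta-\Phi(q))$ for $\theta>\Phi(q)$, the natural candidate $f:=\Phi'(q)e^{\Phi(q)\cdot}-W^{(q)}$ has, by (\ref{scalefn}),
\[
\hat f(\theta)=\frac{\Phi'(q)}{\theta-\Phi(q)}-\frac{1}{\psi(\theta)-q}.
\]
Non-negativity of $f$ is immediate from (\ref{exp-growth}): $W_{\Phi(q)}=e^{-\Phi(q)\cdot}W^{(q)}$ increases to $1/\psi'(\Phi(q))=\Phi'(q)$, so $W^{(q)}(x)\le\Phi'(q)e^{\Phi(q)x}$.

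The heart of the matter is complete monotonicity of $f$. I would pass to the $\Phi(q)$-tilted process $Y$, whose Laplace exponent $\psi_{\Phi(q)}(\lambda)=\psi(\lambda+\Phi(q))-q$ factorises, by the Wiener--Hopf factorisation for spectrally negative processes, as $\psi_{\Phi(q)}(\lambda)=\lambda\,\phi(\lambda)$, where $\phi$ is the Laplace exponent of the descending ladder height subordinator of $Y$ and $\phi(0)=\psi'(\Phi(q))=1/\Phi'(q)$. Writing $\theta=\lambda+\Phi(q)$ and $S(\lambda):=(\phi(\lambda)-\phi(0))/\lambda$, a short manipulation gives $\hat f(\theta)=\phi(0)^{-1}\big(\phi(0)/S(\lambda)+\lambda\big)^{-1}$. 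The key structural input is that, under the hypothesis that $\nu$ has a completely monotone density, $\phi$ is a \emph{complete} Bernstein function; granting this, the complete-Bernstein representation of $\phi$ shows $S$ is Stieltjes, whence $1/S$ and therefore $\phi(0)/S+\lambda$ are complete Bernstein, so their reciprocal $\hat f$ is Stieltjes and $f$ is completely monotone.

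Establishing that $\phi$ is a complete Bernstein function is the main obstacle, and is precisely where the hypothesis is used: one must show that the descending ladder height subordinator of $Y$ inherits a completely monotone L\'evy density from $\nu$, equivalently that its potential measure has a completely monotone density $u$, so that $f(x)=e^{\Phi(q)x}\int_x^\infty u(y)\,\mathrm{d}y$. This transfer of complete monotonicity to the ladder structure is the substantive content lifted from Loeffen \cites{loeffen1,loeffen2}. A subsidiary point is to check that the Stieltjes representation of $\hat f$ is genuinely supported away from $(0,\Phi(q))$, so that $f$ is bounded; this can be seen directly from the fact that the ladder height subordinator has a finite $\Phi(q)$-exponential moment, which forces $\int_x^\infty u(y)\,\mathrm{d}y=O(e^{-\Phi(q)x})$.

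Granting the representation, the remaining assertions follow cleanly. Differentiating gives $W^{(q)\prime}(x)=\Phi(q)\Phi'(q)e^{\Phi(q)x}+(-f'(x))$. Because $f$ is completely monotone, so is $-f'$, and every completely monotone function is log-convex (Cauchy--Schwarz applied to the representing measure yields $(f')^2\le f f''$); the exponential $e^{\Phi(q)x}$ is log-convex as well. A sum of log-convex functions is log-convex by H\"older's inequality, and the sum is \emph{strictly} log-convex since the completely monotone term $-f'$ is strictly log-convex (its representing measure not being a single atom). Thus $W^{(q)\prime}$ is strictly log-convex, hence strictly convex. Finally, a strictly convex function with $W^{(q)\prime}(\infty)=\infty$ (by (\ref{exp-growth})) has a strictly increasing derivative and therefore attains a minimum at a unique point: if $W^{(q)\prime}$ is increasing on all of $(0,\infty)$ then $a^*=0$, otherwise it decreases then increases with a unique minimiser $a^*\in(0,\infty)$; in either case $W^{(q)\prime}$ is strictly decreasing on $(0,a^*)$ and strictly increasing on $(a^*,\infty)$, as claimed.
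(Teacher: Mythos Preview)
The paper does not prove this lemma; it is presented as a compilation of facts ``lifted from the aforementioned two papers'' of Loeffen, so there is no original proof to compare against. Your outline follows essentially the route taken in those references: pass to the $\Phi(q)$-tilted process, use the Wiener--Hopf factorisation $\psi_{\Phi(q)}(\lambda)=\lambda\phi(\lambda)$, invoke that complete monotonicity of the L\'evy density forces $\phi$ to be a complete Bernstein function (equivalently, the descending ladder potential density is completely monotone), and then read off the Stieltjes structure of $\hat f$. You correctly flag the support condition needed to pull the exponential factor $e^{\Phi(q)x}$ inside the Bernstein representation and identify it as a consequence of the ladder subordinator's $\Phi(q)$-exponential moment.

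There is one genuine slip in the strict log-convexity argument. You assert that $-f'$ is strictly log-convex because ``its representing measure [is] not a single atom''. That claim is unsupported and actually false under the hypotheses: for Brownian motion with drift (where $\nu\equiv0$ trivially has a completely monotone density), $W^{(q)}$ is a difference of two exponentials, so $f$ is a single exponential and the representing measure of $-f'$ \emph{is} a single atom. The correct reason the sum $\Phi(q)\Phi'(q)e^{\Phi(q)x}+(-f'(x))$ is strictly log-convex is that the H\"older step you use for ``sum of log-convex is log-convex'' is strict unless the two summands are proportional; since $-f'$ is bounded (because $f$ is) while $e^{\Phi(q)x}$ is not, proportionality is impossible unless $-f'\equiv0$, i.e.\ $f\equiv0$, which would force $W^{(q)}(x)=\Phi'(q)e^{\Phi(q)x}$ and is excluded for the non-degenerate processes considered. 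With this correction your derivation of the shape properties of $W^{(q)\prime}$ and of $a^*$ goes through.
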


\bigskip

Let us now progress to a description of the role played by scale functions in connection with the value of a refraction strategy. In addition to the scale function $W^{(q)}$ associated to the spectrally negative L\'evy process $X$, we shall also define for each $q\geq0$ the scale functions $\mathbb W^{(q)}$ which are associated to the linearly perturbed spectrally negative L\'evy process $Y = \{Y_t : t\geq 0\}$ where  $ Y_t = X_t-\delta t$ for $t\geq0$. Note that because of Assumption \eqref{assumptionH} the aforementioned process does not have monotone paths.
 Further we denote by $\varphi(q)$ the right inverse of the Laplace exponent of $Y$, i.e.
 \[
 \varphi(q)=\inf\{\lambda\geq0:\psi(\lambda)-\delta\lambda=q\}.
 \]
The value function of the refraction strategy at level $b$, henceforth denoted by $v_b$, can now be written explicitly in terms of  $W^{(q)}$, $\mathbb{W}^{(q)}$ and $\varphi(q)$.
Indeed it was shown in Equation (10.25) of \cite{kyploeffen} that
\begin{equation}
\label{valuethres}
v_b(x)= -\delta\int_0^{x-b}\mathbb W^{(q)}(y)\mathrm{d}y + \frac{W^{(q)}(x)+\delta\int_b^x \mathbb W^{(q)}(x-y)W^{(q)\prime}(y)\mathrm{d}y}{h(b)}, \quad x\geq0,
\end{equation}
where $h(b)$ is given by
\begin{equation}
\label{hb}
h(b)=\varphi(q)\mathrm{e}^{\varphi(q)b}\int_b^\infty \mathrm{e}^{-\varphi(q)y}W^{(q)\prime}(y)\mathrm{d}y
= \varphi(q)\int_0^\infty \mathrm{e}^{-\varphi(q)u}W^{(q)\prime}(u+b)\mathrm{d}u.
\end{equation}
 Note that
\begin{equation}
\label{valuebelowthreshold}
v_b(x)=\frac{W^{(q)}(x)}{h(b)} \quad \text{for $x\leq b$.}
\end{equation}

\bigskip

We need also to have a candidate optimal threshold, say $b^*$, in combination with the expression for $v_b$ if we are to check for optimality. To this end define
 $b^*$ as the largest argument at which $h$ attains its minimum. That is to say,
\begin{equation*}
b^*=\sup\{b\geq0: h(b)\leq h(x) \ \text{for all $x\geq0$}\}.
\end{equation*}

Under the same conditions as   Theorem \ref{main}, we are able to say some more about $b^*$.

\begin{lemma}\label{b*} Suppose that  that $\nu$ has a completely monotone density, then  $b^*\in[0,a^*)$ and it is the unique point at which $h$ attains its minimum. Moreover, $b^* >0$ if and only if one of the following three cases hold,
\begin{description}
\item[(i)] $\sigma>0$ and $\varphi(q)<2\delta/\sigma^2$,
\item[(ii)] $\sigma = 0$, $\nu(0,\infty)<\infty$ and $\varphi(q)<\delta(\nu(0,\infty) + q)/c(c-\delta)$ or
\item[(iii)] $\sigma = 0$ and $\nu(0,\infty)=\infty$.
\end{description}
\end{lemma}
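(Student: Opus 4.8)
The plan is to reduce the entire lemma to an analysis of the sign of $h'$. Starting from the second representation in \eqref{hb}, $h(b)=\varphi(q)\int_0^\infty \mathrm{e}^{-\varphi(q)u}W^{(q)\prime}(u+b)\,\mathrm{d}u$, and using that complete monotonicity makes $W^{(q)}$ smooth on $(0,\infty)$ (Lemma \ref{compmon}), I would differentiate and substitute $y=u+b$ to obtain
\[
h'(b)=\varphi(q)\mathrm{e}^{\varphi(q)b}\,I(b),\qquad I(b):=\int_b^\infty \mathrm{e}^{-\varphi(q)y}W^{(q)\prime\prime}(y)\,\mathrm{d}y .
\]
Since $\varphi(q)\mathrm{e}^{\varphi(q)b}>0$, the sign of $h'$ is exactly the sign of $I$, so the whole lemma becomes a study of $I$. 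The crucial input is Lemma \ref{compmon}: $W^{(q)\prime}$ is strictly decreasing on $(0,a^*)$ and strictly increasing on $(a^*,\infty)$, that is $W^{(q)\prime\prime}<0$ on $(0,a^*)$ and $W^{(q)\prime\prime}>0$ on $(a^*,\infty)$.

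Next I would extract the shape of $I$. By the fundamental theorem of calculus $I'(b)=-\mathrm{e}^{-\varphi(q)b}W^{(q)\prime\prime}(b)$, which is strictly positive on $(0,a^*)$ and strictly negative on $(a^*,\infty)$; hence $I$ increases strictly on $(0,a^*)$, decreases strictly on $(a^*,\infty)$ and is maximal at $a^*$. Because $W^{(q)\prime\prime}>0$ on $(a^*,\infty)$ we get $I(b)>0$ for every $b\ge a^*$, while the growth estimate $W^{(q)},W^{(q)\prime},W^{(q)\prime\prime}=O(\mathrm{e}^{\Phi(q)y})$ (from \eqref{exp-growth} and the decomposition in Lemma \ref{compmon}) combined with $\varphi(q)>\Phi(q)$ (valid since $\psi(\Phi(q))-\delta\Phi(q)=q-\delta\Phi(q)<q$) forces $I(b)\to0^+$ as $b\to\infty$ and legitimises the improper integrals and integrations by parts. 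Thus $I$ can be negative only on an initial interval: if $I(0+)\ge0$ then $I>0$ throughout and $h$ is strictly increasing, giving $b^*=0$; if $I(0+)<0$ then, $I$ rising strictly from a negative value to $I(a^*)>0$ on $(0,a^*)$, it has a unique zero in $(0,a^*)$, with $I<0$ before and $I>0$ after. Either way $h$ is strictly decreasing then strictly increasing, so the minimiser is unique, and $h'(a^*)=\varphi(q)\mathrm{e}^{\varphi(q)a^*}I(a^*)>0$ places it strictly below $a^*$, yielding $b^*\in[0,a^*)$.

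It then remains to turn "$I(0+)<0$" into the stated trichotomy. Integrating by parts and using the decay at infinity gives $I(0+)=h(0)-W^{(q)\prime}(0+)$, so $b^*>0$ iff $h(0)<W^{(q)\prime}(0+)$. To evaluate $h(0)=\varphi(q)\int_0^\infty \mathrm{e}^{-\varphi(q)y}W^{(q)\prime}(y)\,\mathrm{d}y$ I would integrate by parts once more, insert the Laplace transform \eqref{scalefn} at $\theta=\varphi(q)$, and use $\psi(\varphi(q))-q=\delta\varphi(q)$ to obtain
\[
h(0)=\varphi(q)\Big(\tfrac1\delta-W^{(q)}(0+)\Big).
\]
Substituting the boundary values of $W^{(q)}(0+)$ and $W^{(q)\prime}(0+)$ from Lemma \ref{prop:Wq0} into $\varphi(q)(\tfrac1\delta-W^{(q)}(0+))<W^{(q)\prime}(0+)$ then reproduces exactly case (i) ($\sigma>0$, giving $\varphi(q)<2\delta/\sigma^2$), case (ii) ($\sigma=0$, $\nu(0,\infty)<\infty$, giving $\varphi(q)<\delta(\nu(0,\infty)+q)/c(c-\delta)$ after clearing the factor $c-\delta>0$ supplied by \eqref{assumptionH}), and case (iii) ($\sigma=0$, $\nu(0,\infty)=\infty$, where $W^{(q)\prime}(0+)=\infty$ makes the inequality automatic).

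The hard part will be the careful treatment of case (iii), where $W^{(q)\prime}(0+)=\infty$ and the integration by parts must be read in the extended reals: there $I(0+)=-\infty$, which I would justify by noting that $W^{(q)\prime}$ is nonetheless integrable near $0$ (since $\int_0^\varepsilon W^{(q)\prime}=W^{(q)}(\varepsilon)-W^{(q)}(0+)<\infty$), so $h(0)$ stays finite while $W^{(q)\prime}(0+)$ does not, forcing $b^*>0$. A secondary technical point is confirming the $O(\mathrm{e}^{\Phi(q)y})$ bounds for $W^{(q)}$ and its first two derivatives, which underpin differentiation under the integral and the two integrations by parts. Finally, the strict inequality $b^*<a^*$ rests on $a^*>0$; this is immediate in case (iii) and holds precisely when $W^{(q)\prime}$ is initially decreasing, the only genuinely degenerate possibility being $b^*=a^*=0$, which is ruled out as soon as $a^*>0$.
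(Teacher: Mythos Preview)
Your approach is correct and closely parallels the paper's, though the bookkeeping differs. The paper derives the relation $h'(b)=\varphi(q)\bigl(h(b)-W^{(q)\prime}(b)\bigr)$ directly from the first representation in \eqref{hb} (no differentiation under the integral is needed) and argues uniqueness of $b^*$ via a crossing argument between $h$ and $W^{(q)\prime}$, using the strict convexity of $W^{(q)\prime}$; it then rules out $b^*\geq a^*$ by two short contradictions. Your function $I(b)$ is nothing other than $\mathrm{e}^{-\varphi(q)b}\bigl(h(b)-W^{(q)\prime}(b)\bigr)$, so the two analyses coincide up to a positive factor. What you gain is transparency: computing $I'(b)=-\mathrm{e}^{-\varphi(q)b}W^{(q)\prime\prime}(b)$ makes the single-sign-change of $h'$ immediate from the single sign change of $W^{(q)\prime\prime}$ (itself a consequence of strict log-convexity of $W^{(q)\prime}$), whereas the paper's crossing argument is somewhat terser at that step. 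What the paper gains is that it avoids the mild technical overhead of justifying differentiation under the integral and the $O(\mathrm{e}^{\Phi(q)y})$ growth bounds on $W^{(q)\prime\prime}$; the paper instead establishes $h(\infty)=\infty$ by a direct lower bound $h(b)\geq W^{(q)}(b)\varphi(q)\Phi(q)/(\varphi(q)-\Phi(q))$. The reduction of $b^*>0$ to $h(0)<W^{(q)\prime}(0+)$ and the subsequent evaluation of $h(0)=\varphi(q)(1/\delta-W^{(q)}(0))$ are identical in both proofs. Your closing remark about the degenerate possibility $a^*=0$ is honest; the paper's proof does not address it explicitly either, and the statement $b^*\in[0,a^*)$ should be read with this caveat in mind.
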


\begin{proof}[\textbf{Proof}]
We begin by showing that $b^*<\infty$. 
Note that
\begin{equation*}
\begin{split}
h(b)
=& (\varphi(q))^2 \int_0^\infty \mathrm{e}^{-\varphi(q) y} \left( W^{(q)}(y+b)-W^{(q)}(b) \right) \mathrm{d}y \\
=& (\varphi(q))^2\mathrm{e}^{\Phi(q)b}\int_0^\infty\mathrm{e}^{-\varphi(q)y} \left( \mathrm{e}^{\Phi(q)y}W_{\Phi(q)}(y+b)-W_{\Phi(q)}(b) \right) \mathrm{d}y\\
\geq & (\varphi(q))^2\mathrm{e}^{\Phi(q)b}W_{\Phi(q)}(b)\int_0^\infty\mathrm{e}^{-\varphi(q) y} \left( \mathrm{e}^{\Phi(q)y}-1\right) \mathrm{d}y\\
= & W^{(q)}(b)\frac{\varphi(q)\Phi(q)}{\varphi(q)-\Phi(q)},
\end{split}
\end{equation*}
where we have  used a change of variables and an integration by parts for the first equality.  Since $W^{(q)}(\infty)=\infty$ it follows that   $\lim_{b\rightarrow\infty}h(b)=\infty$ which in turn implies the finiteness of $b^*$.

From \eqref{hb}, we see that $h$ is continuously differentiable and that
\begin{equation}
\label{relationhscale}
h'(b)=\varphi(q) \left( h(b) - W^{(q)\prime}(b) \right).
\end{equation}
It follows immediately that $h'(b) > (<)0$ if and only if $h(b) > (<)W^{(q)\prime} (b)$. Thanks to Lemma \ref{compmon}, we know that $W^{(q)\prime}$ is a strictly convex function satisfying $W^{(q)\prime}(\infty) = \infty$ and since $h(\infty) = \infty$, it follows that, there is a unique $b^*\in[0,\infty)$ for which the minimum of $h$ is attained and $h(b)<W^{(q)\prime}(b)$ for $b<b^*$  and $h(b) >W^{(q)\prime}(b)$ when $b>b^*$.  Moreover, when  $b^*>0$, we have that $h(b^*) =W^{(q)\prime}(b^*)$.

Let us now show that $b^*<a^*$. Suppose for contradiction that $b^*>a^*$. In that case, since $W^{(q)\prime\prime}(b)>0$ for all $b\geq b^*$ and $h'(b^*)=0$, it follows that there exists a sufficiently small $\epsilon>0$ such that $W^{(q)\prime}(b)>h(b)$ for all $b\in(b^*,b^*+\epsilon)$. However this last statement contradicts the earlier conclusion that $W^{(q)\prime}(b)<h(b)$ for all $b>b^*$. Now suppose, also for contradiction, that $b^* = a^*$. Considering the second equality in (\ref{hb}), since  $W^{(q)\prime}(u+a^*) > W^{(q)\prime}(a^*)$ for all $u> 0$, it is straightforward to show that
$
h(a^*) > W^{(q)\prime}(a^*)
$
which again contradicts our earlier conclusion that $h(b^*) = W^{(q)}(b^*)$.


Finally, given that $b^*$ characterises the single crossing point of the function $h$ over the function $W^{(q)\prime}$,  we have that $b^*>0$ if and only if $h(0)<W^{(q)\prime}(0+)$. Note from (\ref{hb}) that
\begin{equation}
\label{h0}
h(0)=\varphi(q) \left( \frac{\varphi(q)}{\psi(\varphi(q))-q} - W^{(q)}(0) \right) =  \varphi(q)  \left( \frac1{\delta} - W^{(q)}(0) \right)
\end{equation}
where we have  used the fact that for $q>0$, that by integration by parts in (\ref{scalefn}),
\begin{equation}
\int_{[0,\infty)} e^{-\theta x} W^{(q)}({\rm d}x) = \frac{\theta}{\psi(\theta) -q} ,\,\, \theta >\Phi(q).
\label{IBP}
\end{equation}
and that $\varphi(q)>\Phi(q)$. The three cases that are equivalent to  $b^*>0$ now follow directly from the right hand side of  (\ref{h0})  compared against the expression given for $W^{(q)\prime}(0+)$ in Lemma \ref{prop:Wq0}.
\end{proof}

\section{Verification}\label{verify}

For the remainder of the paper, we will focus on verifying  the optimality of the refraction strategy at threshold level $b^*$ under the condition that $\nu$ has a completely monotone density.

Given the spectrally negative L\'evy process $X$, we call a function $f$ (defined on at least the positive half line) sufficiently smooth if $f$ is continuously differentiable on $(0,\infty)$ when $X$ has paths of bounded variation and is twice continuously differentiable on $(0,\infty)$ when $X$ has paths of unbounded variation. We let $\Gamma$ be the operator acting on sufficiently smooth functions $f$, defined by
\begin{equation*}
\begin{split}
\Gamma f(x)= \gamma f'(x)+\frac{\sigma^2}{2}f''(x) +\int_{(0,\infty)}[f(x-z)-f(x)+f'(x)z\mathbf{1}_{\{0<z\leq1\}}]\nu(\mathrm{d}z).
\end{split}
\end{equation*}

The following lemma constitutes standard technology as far as optimal control is concerned. For this reason its proof, which requires only a technical modification of Lemma 1 in \cites{loeffen2}, is deferred to the appendix.

\begin{lemma}
\label{verificationlemma}
Suppose $\hat{\pi}$ is an admissible dividend strategy such that $v_{\hat{\pi}}$ is sufficiently smooth on $(0,\infty)$, right-continuous at zero and for all $x>0$
\begin{equation}
\label{HJB-inequality}
\sup_{0\leq r\leq\delta}\Gamma v_{\hat{\pi}}(x)-q v_{\hat{\pi}}(x)-rv'_{\hat{\pi}}(x)+r\leq 0.
\end{equation}
Then $v_{\hat{\pi}}(x)=v_*(x)$ for all $x\geq0$ and hence $\hat{\pi}$ is an optimal strategy.
\end{lemma}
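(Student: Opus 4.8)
The plan is to establish the two inequalities $v_{\hat\pi}\le v_*$ and $v_{\hat\pi}\ge v_*$ separately. The first is immediate from the definition \eqref{controlproblem}: since $\hat\pi$ is itself admissible, $v_{\hat\pi}(x)\le\sup_{\pi\in\Pi}v_\pi(x)=v_*(x)$. All the work therefore goes into the reverse inequality, for which it suffices to show $v_\pi(x)\le v_{\hat\pi}(x)$ for an \emph{arbitrary} admissible strategy $\pi$, with associated controlled process $U^\pi_t=X_t-\int_0^t\ell^\pi(s)\,\mathrm{d}s$, rate $\ell^\pi\in[0,\delta]$ and ruin time $\sigma^\pi$.

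First I would apply a change-of-variables formula to the discounted process $\mathrm{e}^{-q(t\wedge\sigma^\pi)}v_{\hat\pi}(U^\pi_{t\wedge\sigma^\pi})$. Here the distinction in the definition of ``sufficiently smooth'' is essential: when $X$ has paths of bounded variation the paths of $U^\pi$ are of bounded variation and a Stieltjes-type change of variables needs only that $v_{\hat\pi}$ be $C^1$, whereas in the unbounded variation case the Gaussian component and the accumulation of small jumps force the use of the It\^o formula for semimartingales, which needs $v_{\hat\pi}\in C^2$. In either case, since the control contributes an extra drift $-\ell^\pi(s)\,\mathrm{d}s$, the finite-variation part of the decomposition of $\mathrm{e}^{-qs}v_{\hat\pi}(U^\pi_s)$ equals $\mathrm{e}^{-qs}\big(\Gamma v_{\hat\pi}(U^\pi_{s-})-qv_{\hat\pi}(U^\pi_{s-})-\ell^\pi(s)v'_{\hat\pi}(U^\pi_{s-})\big)\,\mathrm{d}s$ and the remainder is a local martingale $M$, where $v_{\hat\pi}$ is understood to vanish on $(-\infty,0)$.

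Now I would invoke the hypothesis \eqref{HJB-inequality} pointwise. Since $0\le\ell^\pi(s)\le\delta$, choosing $r=\ell^\pi(s)$ in the supremum gives $\Gamma v_{\hat\pi}(y)-qv_{\hat\pi}(y)-\ell^\pi(s)v'_{\hat\pi}(y)\le-\ell^\pi(s)$ for every $y>0$; evaluated along the path at $y=U^\pi_{s-}$, which stays strictly positive prior to $\sigma^\pi$, this shows the drift is dominated by $-\mathrm{e}^{-qs}\ell^\pi(s)\,\mathrm{d}s$. Integrating and rearranging yields
\[
\int_0^{t\wedge\sigma^\pi}\mathrm{e}^{-qs}\ell^\pi(s)\,\mathrm{d}s\le v_{\hat\pi}(x)-\mathrm{e}^{-q(t\wedge\sigma^\pi)}v_{\hat\pi}(U^\pi_{t\wedge\sigma^\pi})+M_{t\wedge\sigma^\pi}.
\]
Taking expectations after a standard localisation of $M$, using that $v_{\hat\pi}\ge0$ to discard the terminal term, and letting $t\uparrow\infty$ (monotone convergence on the left, Fatou along the localising sequence) gives $v_\pi(x)=\mathbb{E}_x\big[\int_0^{\sigma^\pi}\mathrm{e}^{-qs}\ell^\pi(s)\,\mathrm{d}s\big]\le v_{\hat\pi}(x)$, where the first equality uses that $L^\pi$ is absolutely continuous and so carries no atom at $\sigma^\pi$. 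Since $\pi$ was arbitrary, $v_*\le v_{\hat\pi}$, which together with the first inequality completes the proof.

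The main obstacle is the rigorous application of the change-of-variables formula under the weak smoothness that is genuinely available, together with the treatment of the terminal jump at $\sigma^\pi$, where $U^\pi$ lands strictly below zero and $v_{\hat\pi}$ is neither smooth nor a priori defined; this is resolved by the zero extension on $(-\infty,0)$ and by checking that the jump integral in $\Gamma$ behaves accordingly, and it is precisely the ``technical modification'' of Lemma~1 of \cite{loeffen2} referred to in the text. A secondary point requiring care is the integrability needed to pass from the local martingale to a genuine expectation and to interchange the limit with the expectation; the assumed right-continuity of $v_{\hat\pi}$ at the origin is what ensures the argument also covers the degenerate starting point $x=0$.
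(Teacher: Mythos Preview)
Your skeleton---apply It\^o/change of variables to $\mathrm{e}^{-qt}v_{\hat\pi}(U^\pi_t)$, invoke \eqref{HJB-inequality} pointwise along the path, localise the martingale, pass to the limit, and handle $x=0$ by right-continuity---matches the paper's. The gap is in how you treat the boundary at zero. Extending $v_{\hat\pi}$ by zero on $(-\infty,0)$ does not produce a function that is $C^1$ (resp.\ $C^2$) across the origin: in the bounded-variation case $v_{\hat\pi}(0+)=W^{(q)}(0+)/h(b^*)>0$, so there is an outright jump discontinuity, and in the unbounded-variation case the value matches but the derivative need not vanish at $0+$. Hence the change-of-variables formula cannot be applied up to $t\wedge\sigma^\pi$ with this extension as you claim. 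A related slip is the assertion that $U^\pi_{s-}$ ``stays strictly positive prior to $\sigma^\pi$'': since $\sigma^\pi$ is first entry into $(-\infty,0)$, the controlled process may touch $0$ before $\sigma^\pi$, and at $0$ neither smoothness of $v_{\hat\pi}$ nor \eqref{HJB-inequality} is assumed.

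The paper resolves this differently. It localises with $T_n=\inf\{t>0:U^\pi_t\notin(1/n,n)\}$, so that on $[0,t\wedge T_n]$ the process lives in a compact subset of $(0,\infty)$ where $w=v_{\hat\pi}$ is genuinely sufficiently smooth and \eqref{HJB-inequality} holds; It\^o is applied there and the martingale pieces are identified via the L\'evy--It\^o decomposition and the compensation formula. Letting $t,n\to\infty$ gives the bound with $\sigma_0^\pi=\inf\{t:U^\pi_t\le0\}$ (the \emph{weak} first-passage time) in place of $\sigma^\pi$. To close the remaining gap between $\sigma_0^\pi$ and $\sigma^\pi$, the paper first reduces, via an explicit $\epsilon$-approximation, to the subclass $\Pi_0$ of strategies for which $v_\pi$ coincides with the expectation up to $\sigma_0^\pi$. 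This two-step device (spatial localisation plus the $\Pi_0$ reduction), not a zero extension, is the ``technical modification'' of \cite{loeffen2} that the text refers to, and it is missing from your sketch.
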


As we wish to work with this lemma for the case that $v_{\hat\pi}  = v_{b*}$, we show next that $v_{b^*}$ is sufficiently smooth.

\begin{lemma}\label{suffsmooth} Under the assumption of Theorem \ref{main},
the value function $v_{b^*}$ is sufficiently smooth.
\end{lemma}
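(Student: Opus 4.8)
The plan is to treat $v_{b^*}$ separately on the two regions $(0,b^*)$ and $(b^*,\infty)$, on each of which smoothness is inherited directly from the scale functions, and then to verify the appropriate smooth fit at the single junction $b^*$. First I would note that $Y=X-\delta t$ shares the L\'evy measure $\nu$ of $X$, so the completely monotone density hypothesis applies to $Y$ as well; hence Lemma~\ref{compmon} applies to both $W^{(q)}$ and $\mathbb{W}^{(q)}$, and each is the difference of an exponential and a completely monotone function, so in particular both are $C^{\infty}$ on $(0,\infty)$. By \eqref{valuebelowthreshold}, $v_{b^*}=W^{(q)}/h(b^*)$ on $(0,b^*)$, which is therefore smooth there. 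On $(b^*,\infty)$, the explicit expression \eqref{valuethres} writes $v_{b^*}$ as a combination of $W^{(q)}$, $\mathbb{W}^{(q)}$ and a convolution integral; for $x$ near any fixed interior point of $(b^*,\infty)$ all arguments of the scale functions stay bounded away from the origin, so differentiation under the integral sign shows $v_{b^*}$ is smooth there too. If $b^*=0$ this already proves the lemma, so from now I assume $b^*>0$.

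Next I would verify the matching at $b^*$. Value continuity is immediate from \eqref{valuethres}--\eqref{valuebelowthreshold}, since both integrals in \eqref{valuethres} vanish at $x=b^*$, giving $v_{b^*}(b^*-)=v_{b^*}(b^*+)=W^{(q)}(b^*)/h(b^*)$. To handle the derivatives I would differentiate \eqref{valuethres} after the substitution $u=x-y$ in the convolution, so that the derivatives land on $W^{(q)}$ rather than on $\mathbb{W}^{(q)}$. Writing $H:=h(b^*)$ and comparing against the left derivatives $v_{b^*}'(b^*-)=W^{(q)\prime}(b^*)/H$ and $v_{b^*}''(b^*-)=W^{(q)\prime\prime}(b^*)/H$, a short computation shows that the only terms obstructing a match are boundary contributions proportional, respectively, to $\delta\,\mathbb{W}^{(q)}(0+)\big(W^{(q)\prime}(b^*)/H-1\big)$ (in the first derivative) and to $\delta\,\mathbb{W}^{(q)\prime}(x-b^*)\big(W^{(q)\prime}(b^*)/H-1\big)$ (in the second derivative, relevant only in the unbounded variation case).

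The crux of the argument is that every one of these obstructing terms carries the common factor $W^{(q)\prime}(b^*)/H-1$, which vanishes: since $b^*>0$, Lemma~\ref{b*} gives $h(b^*)=W^{(q)\prime}(b^*)$, i.e.\ $H=W^{(q)\prime}(b^*)$. Consequently $v_{b^*}'(b^*+)=1=W^{(q)\prime}(b^*)/H=v_{b^*}'(b^*-)$, settling the bounded variation case; and in the unbounded variation case, using in addition $\mathbb{W}^{(q)}(0+)=0$, the surviving terms give $v_{b^*}''(b^*+)=W^{(q)\prime\prime}(b^*)/H=v_{b^*}''(b^*-)$. This yields $C^1$ (resp.\ $C^2$) regularity at $b^*$ and completes the proof.

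I expect the genuine obstacle to be the subcase $\sigma=0$ with unbounded variation, where $\mathbb{W}^{(q)\prime}(0+)=\infty$ by Lemma~\ref{prop:Wq0}, so that the term $\delta\,\mathbb{W}^{(q)\prime}(x-b^*)\big(W^{(q)\prime}(b^*)/H-1\big)$ arises as a difference of two quantities each blowing up as $x\downarrow b^*$; here term-by-term continuity fails and one truly relies on the exact cancellation furnished by $H=W^{(q)\prime}(b^*)$. The role of the completely monotone density is precisely to supply, through Lemma~\ref{compmon}, the smoothness of $W^{(q)}$ and $\mathbb{W}^{(q)}$ on $(0,\infty)$ that renders every surviving term continuous once the singular boundary contributions have been removed. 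A routine but necessary technical point is to justify the differentiations under the integral sign and the associated Leibniz boundary terms, which is legitimate because the substitution $u=x-y$ keeps $W^{(q)}$ and its derivatives evaluated on a compact subinterval of $(0,\infty)$.
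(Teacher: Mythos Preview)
Your proposal is correct and follows essentially the same route as the paper. Both arguments use that $W^{(q)}$ and $\mathbb{W}^{(q)}$ are $C^\infty$ on $(0,\infty)$ under the completely monotone hypothesis, treat the case $b^*=0$ separately, and for $b^*>0$ exploit the identity $h(b^*)=W^{(q)\prime}(b^*)$ from Lemma~\ref{b*} to kill the boundary contributions at the junction. The only cosmetic difference is that the paper substitutes $h(b^*)=W^{(q)\prime}(b^*)$ first and then integrates by parts in the convolution, arriving at the closed formulas \eqref{simpleform} and \eqref{secondderiv} that are manifestly continuous across $b^*$ (and are reused in Lemma~\ref{cruciallemma}); you instead keep $H=h(b^*)$ as a parameter and exhibit the potential jump as a multiple of $W^{(q)\prime}(b^*)/H-1$, which then vanishes. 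These are two packagings of the same cancellation.
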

\begin{proof}[{\bf Proof}]
Recall from Lemma \ref{compmon} that when $\nu$ has a completely monotone density it follows that both $W^{(q)}$ and $\mathbb{W}^{(q)}$ are infinitely differentiable.

Now suppose that $b^* = 0$. Then from (\ref{valuethres}) it follows that
 \begin{equation}
v_0(x)= -\delta \left( \int_0^{x}\mathbb W^{(q)}(y)\mathrm{d}y - \frac1{\varphi(q)} \mathbb W^{(q)}(x) \right), \quad x\geq0,
\label{v0}
\end{equation}
which is clearly sufficiently smooth.

Next suppose that $b^*>0$. By differentiating \eqref{valuethres}, we get
\begin{equation}
\label{dervaluethresh}
v_{b^*}'(x)=-{\delta}\mathbb W^{(q)}(x-b^*)+\frac{(1+\delta \mathbb W^{(q)}(0)) W^{(q)\prime}(x)+\delta\int_{b^*}^x \mathbb W^{(q)\prime}(x-y)W^{(q)\prime}(y)\mathrm{d}y }{W^{(q)\prime}(b^*)}.
\end{equation}
Using an integration by parts in \eqref{dervaluethresh} leads to
\begin{equation}
\label{simpleform}
 v_{b^*}'(x)=\frac{  W^{(q)\prime}(x)+\delta\int_{b^*}^x \mathbb W^{(q)}(x-y)W^{(q)\prime\prime}(y)\mathrm{d}y }{W^{(q)\prime}(b^*)},
\end{equation}
which is continuous in $x$.
Differentiating (\ref{simpleform}) leads us to
\begin{eqnarray}
\label{secondderiv}
 v_{b^*}''(x)&=&\frac{  W^{(q)\prime\prime}(x)+\delta\mathbb{W}^{(q)}(0)W^{(q)\prime\prime}(x) + \delta\int_{b^*}^x \mathbb W^{(q)\prime}(x-y)W^{(q)\prime\prime}(y)\mathrm{d}y }{W^{(q)\prime}(b^*)}.
\end{eqnarray}
The expression on the right hand side  is clearly continuous in $x$ when $X$ has paths of unbounded variation as $\mathbb{W}^{(q)}(0)  =0$.
\end{proof}

Inspired by the cases that $X$ is diffusive or a Cram\'er-Lundberg process with exponentially distributed jumps, for which a solution to
the control problem at hand is known, we move next to the following two lemmas which convert the Hamilton-Jacobi-Bellman inequality in Lemma \ref{verificationlemma} into a more user friendly sufficient condition.

\begin{lemma}
\label{tussenlemma} Under the assumption of Theorem \ref{main}
the value function $v_{b^*}$ satisfies \eqref{HJB-inequality} if and only if
\begin{equation}
\label{equiv_inequality2}
\begin{cases}
v_{b^*}'(x)\geq1 & \text{if $0<x\leq b^*$}, \\
v_{b^*}'(x)\leq1 & \text{if $x>b^*$}.
\end{cases}
\end{equation}
\end{lemma}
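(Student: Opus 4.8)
The plan is to first evaluate the supremum over $r$ in \eqref{HJB-inequality} explicitly. As $r$ enters only through the affine term $r(1-v_{b^*}'(x))$, one has $\sup_{0\le r\le\delta}r(1-v_{b^*}'(x))=\delta\max\{1-v_{b^*}'(x),0\}$, the maximiser being $r=\delta$ when $v_{b^*}'(x)\le1$ and $r=0$ when $v_{b^*}'(x)\ge1$. Hence, using that $v_{b^*}$ is sufficiently smooth by Lemma \ref{suffsmooth}, the inequality \eqref{HJB-inequality} is equivalent to the single pointwise inequality
\[
(\Gamma-q)v_{b^*}(x)+\delta\max\{1-v_{b^*}'(x),0\}\le0,\qquad x>0.
\]

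The key inputs are the two integro-differential identities for $v_{b^*}$ on either side of $b^*$. For $0<x<b^*$, \eqref{valuebelowthreshold} gives $v_{b^*}=W^{(q)}/h(b^*)$ on $(-\infty,b^*)$ (with $v_{b^*}\equiv0=W^{(q)}$ below the origin); since every jump destination $x-z$ lies in $(-\infty,x)\subset(-\infty,b^*)$ and $W^{(q)}$ satisfies $(\Gamma-q)W^{(q)}=0$ on $(0,\infty)$ (Chapter 8 of \cite{kypbook}), we obtain $(\Gamma-q)v_{b^*}(x)=0$. For $x>b^*$, where the controlled process is driven by $Y_t=X_t-\delta t$ while dividends accrue at rate $\delta$, the value function solves the running-reward equation $(\Gamma-q)v_{b^*}(x)-\delta v_{b^*}'(x)+\delta=0$, i.e. $(\Gamma-q)v_{b^*}(x)=\delta(v_{b^*}'(x)-1)$; this is the defining integro-differential equation for the refraction value and can be obtained either from its probabilistic construction in \cite{kyploeffen} or by substituting \eqref{valuethres} into $\Gamma$. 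I would also record that $v_{b^*}'(b^*)=1$ when $b^*>0$, immediate from \eqref{simpleform}, so that the two identities match at $b^*$.

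With these identities in hand the equivalence is a short deterministic check. Assuming \eqref{equiv_inequality2}: for $0<x\le b^*$ one has $v_{b^*}'(x)\ge1$, so $\max\{1-v_{b^*}'(x),0\}=0$ and the displayed inequality reads $(\Gamma-q)v_{b^*}(x)=0\le0$; for $x>b^*$ one has $v_{b^*}'(x)\le1$, so the left-hand side equals $\delta(v_{b^*}'(x)-1)+\delta(1-v_{b^*}'(x))=0$. Conversely, if \eqref{HJB-inequality} holds but $v_{b^*}'(x)<1$ at some $0<x<b^*$, the left-hand side would be $\delta(1-v_{b^*}'(x))>0$, a contradiction; if $v_{b^*}'(x)>1$ at some $x>b^*$, it would be $\delta(v_{b^*}'(x)-1)>0$, again a contradiction. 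Continuity of $v_{b^*}'$ (Lemma \ref{suffsmooth}) then covers the point $x=b^*$, giving \eqref{equiv_inequality2}.

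The main obstacle is the second identity, $(\Gamma-q)v_{b^*}(x)=\delta(v_{b^*}'(x)-1)$ for $x>b^*$, rather than the equivalence itself. The jump integral in $\Gamma$ couples the values of $v_{b^*}$ above $b^*$, below $b^*$ and below the origin, so the computation cannot be localised; one must control the compensated integral near the origin and treat separately the bounded-variation case (where $\Gamma$ involves only $v_{b^*}'$, continuous by \eqref{simpleform}) and the unbounded-variation case (where $\Gamma$ involves $v_{b^*}''$, continuous at $b^*$ by \eqref{secondderiv} since $\mathbb{W}^{(q)}(0)=0$). Once this regional equation is verified, the rest is the elementary case analysis above.
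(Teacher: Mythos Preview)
Your plan is correct and follows the same structure as the paper: establish the two regional identities $(\Gamma-q)v_{b^*}=0$ on $(0,b^*]$ and $(\Gamma-q)v_{b^*}-\delta v_{b^*}'+\delta=0$ on $(b^*,\infty)$, then reduce \eqref{HJB-inequality} to \eqref{equiv_inequality2} by exactly the case analysis you describe. The one point where the paper is more explicit is the second identity, which you correctly flag as the main obstacle but only sketch: rather than direct substitution of \eqref{valuethres} into $\Gamma$, the paper gives a self-contained probabilistic argument, showing that $t\mapsto \mathrm{e}^{-q(t\wedge\tau_{b^*}^-)}\bigl(v_{b^*}(Y_{t\wedge\tau_{b^*}^-})-\delta/q\bigr)$ is a $\mathbf{P}_x$-martingale for $x>b^*$ (via the strong Markov property and the equality in law of $Y$ and $U^{b^*}$ up to first passage below $b^*$), and then passing from the martingale property to the generator equation as in \cite{palmdiv} and \cite{biffiskyp}.
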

\begin{proof}[\textbf{Proof}]
We first establish the following two equalities:
\begin{equation}
\label{generators}
\begin{split}
(\Gamma-q) v_{b^*}(x)=0  & \quad \text{for $0<x\leq b^*$},\\
(\Gamma-q) v_{b^*}(x) -\delta v_{b^*}'(x)+\delta =0 & \quad  \text{for $x>b^*$}.
\end{split}
\end{equation}
Recalling  \eqref{valuebelowthreshold} and the fact  that $v_{b^*}$ is sufficiently smooth, the first part of \eqref{generators} is proved in Lemma 4 of \cite{palmdiv} (see also \cite{biffiskyp}). In a similar way, the second part follows after we show that $M=\{M_t,t\geq0\}$ given by
\begin{equation*}
M_t= \mathrm{e}^{-q(t\wedge\tau_{b^*}^-)} \left( v_{b^*}(Y_{t\wedge\tau_{b^*}^-})-\frac{\delta}{q} \right), t\geq0
\end{equation*}
is a $\mathbf{P}_x$-martingale for $x>{b^*}$; here $\tau_{b^*}^-$ stands for $\tau_{b^*}^-=\inf\{t>0:Y_t<b^*\}$ and $\mathbf{P}_x$ is the law of $Y$ when $Y_0 = x$. Indeed, the martingale property follows by the following two computations and the tower property of conditional expectation. First we have for $x>{b^*}$ by the strong Markov property,
\begin{equation*}
\begin{split}
\mathbf{E}_x & \left.\left[ \mathrm{e}^{-q\tau_{b^*}^-} \left( v_{b^*}(Y_{\tau_{b^*}^-})-\frac{\delta}{q}  \right) \right| \mathcal{F}_t \right] \\
= & \mathbf{E}_x   \left.\left[ \mathbf{1}_{\{t<\tau_{b^*}^-\}} \mathrm{e}^{-q\tau_{b^*}^-} \left( v_{b^*}(Y_{\tau_{b^*}^-})-\frac{\delta}{q}  \right) \right| \mathcal{F}_t \right] + \mathbf{E}_x   \left.\left[ \mathbf{1}_{\{t\geq\tau_{b^*}^-\}} \mathrm{e}^{-q\tau_{b^*}^-} \left( v_{b^*}(Y_{\tau_{b^*}^-})-\frac{\delta}{q}  \right) \right| \mathcal{F}_t \right] \\
= & \mathbf{1}_{\{t<\tau_{b^*}^-\}}\mathrm{e}^{-qt}\mathbf{E}_{Y_t} \left[ \mathrm{e}^{-q\tau_{b^*}^-} \left( v_{b^*}(Y_{\tau_{b^*}^-})-\frac{\delta}{q}  \right) \right]
   + \mathbf{1}_{\{t\geq\tau_{b^*}^-\}} M_t.
\end{split}
\end{equation*}
Here $\mathbf{E}_x$ denotes expectation with respect to $\mathbf{P}_x$.
Recall that $U^{b^*}$ is the refracted L\'evy process given by (\ref{SDE}) with threshold $b^*$. Let $\kappa_{b^*}^-=\inf\{t>0:U_t^{b^*}<b^*\}$, then
\begin{equation*}
\begin{split}
\mathbf{1}_{\{t<\tau_{b^*}^-\}} M_t = & \mathbf{1}_{\{t<\tau_{b^*}^-\}}\mathrm{e}^{-qt} \mathbb{E}_{Y_t} \left[ \delta\int_0^{\sigma^{b^*}} \mathrm{e}^{-qs}\mathbf{1}_{\{U^{b^*}_s\in(b^*,\infty)\}} \mathrm{d}s - \frac{\delta}q \right] \\
= & \mathbf{1}_{\{t<\tau_{b^*}^-\}}\mathrm{e}^{-qt} \mathbb{E}_{Y_t} \left[ \delta\int_{\kappa_{b^*}^-}^{\sigma^{b^*}} \mathrm{e}^{-qs}\mathbf{1}_{\{U^{b^*}_s\in(b^*,\infty)\}} \mathrm{d}s - \frac{\delta}q \mathrm{e}^{-\kappa_{b^*}^-} \right] \\
= & \mathbf{1}_{\{t<\tau_{b^*}^-\}}\mathrm{e}^{-qt}\mathbf{E}_{Y_t} \left[ \mathrm{e}^{-q\tau_{b^*}^-} \left( v_{b^*}(Y_{\tau_{b^*}^-})-\frac{\delta}{q}  \right) \right],
\end{split}
\end{equation*}
where we used in the last line that given $Y_0=U_0$, $\{Y_t,0\leq t\leq \tau_{b^*}^-\}$ is equal in law to $\{U^{b^*}_t,0\leq t\leq \kappa_{b^*}^-\}$.

We now continue with the proof of the lemma.
It is easily seen that
  condition \eqref{HJB-inequality} is equivalent to
\begin{equation}
\label{equiv_inequality}
\begin{cases}
(\Gamma-q) v_{b^*}(x) \leq 0 & \text{if $v'_{b^*}(x)\geq1$}, \\
(\Gamma-q) v_{b^*}(x) -\delta v'_{b^*}(x)+\delta \leq 0 & \text{if $v'_{b^*}(x)<1$}.
\end{cases}
\end{equation}

Suppose now \eqref{equiv_inequality2} holds.  If $v_{b^*}'(x)>1$, then \eqref{equiv_inequality2} implies $x\leq b^*$ and so by \eqref{generators} $(\Gamma-q) v_{b^*}(x)=0$. If $v_{b^*}'(x)<1$, then \eqref{equiv_inequality2} implies $x> b^*$ and so by \eqref{generators} $(\Gamma-q) v_{b^*}(x) -\delta v_{b^*}'(x)+\delta =0$. If $v_{b^*}'(x)=1$, then we have by \eqref{generators} $(\Gamma-q) v_{b^*}(x)=0$. Hence \eqref{equiv_inequality} holds.

Suppose now \eqref{equiv_inequality} holds. Let $0<x\leq b^*$ and suppose $v_{b^*}'(x)<1$. Then  \eqref{generators} and \eqref{equiv_inequality} implies $-\delta v_{b^*}'(x)+\delta\leq0$ which implies $v_{b^*}'(x)\geq 1$ which forms a contradiction. Hence we deduce  $v_{b^*}'(x)\geq 1$.
Now let $x>b^*$ and suppose $v_{b^*}'(x)>1$. Then  \eqref{generators} and \eqref{equiv_inequality} implies $\delta v_{b^*}'(x)-\delta\leq0$ which implies $v_{b^*}'(x)\leq 1$ which forms a contradiction. Hence we deduce  $v_{b^*}'(x)\leq 1$.
\end{proof}

The following lemma forms the  most difficult part of the proof of the main theorem. It is here that Lemma \ref{compmon} and thus the assumption of complete monotonicity on the density of the L\'evy measure is most crucially needed.
\begin{lemma}
\label{cruciallemma}
Suppose the L\'evy measure  has a completely monotone density. Then the function $v_{b^*}$ satisfies \eqref{equiv_inequality2}.
\end{lemma}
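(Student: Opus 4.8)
The plan is to verify the two inequalities in \eqref{equiv_inequality2} separately on the regions $\{0<x\le b^*\}$ and $\{x>b^*\}$, reducing everything to the shape of the derivative $v_{b^*}'$. Throughout I would use that, since $\nu$ has a completely monotone density, so does the L\'evy measure of $Y=X-\delta t$ (the two processes share the same jump measure), so that Lemma \ref{compmon} applies to \emph{both} $W^{(q)}$ and $\mathbb{W}^{(q)}$: each is a single exponential minus a nonnegative completely monotone function. The region $0<x\le b^*$ is then immediate (and the statement is vacuous if $b^*=0$): by \eqref{valuebelowthreshold} together with the identity $h(b^*)=W^{(q)\prime}(b^*)$ from the proof of Lemma \ref{b*}, one has $v_{b^*}'(x)=W^{(q)\prime}(x)/W^{(q)\prime}(b^*)$ there; since $b^*\in[0,a^*)$ by Lemma \ref{b*} and $W^{(q)\prime}$ is strictly decreasing on $(0,a^*)$ by Lemma \ref{compmon}, we get $W^{(q)\prime}(x)\ge W^{(q)\prime}(b^*)$ and hence $v_{b^*}'(x)\ge 1$.

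The real work is the bound $v_{b^*}'(x)\le 1$ for $x>b^*$. I would first dispose of $b^*=0$: from \eqref{v0} one computes $v_0'(x)=\delta\tilde f(x)-\delta\tilde f'(x)/\varphi(q)$, where $\mathbb{W}^{(q)}(u)=\varphi'(q)\mathrm e^{\varphi(q)u}-\tilde f(u)$ with $\tilde f$ completely monotone; thus $v_0'$ is itself completely monotone, hence nonincreasing, and it suffices to observe $v_0'(0+)\le 1$, which is exactly the negation of conditions (i)--(iii) of Lemma \ref{b*} characterising $b^*>0$. For $b^*>0$ I would work from \eqref{simpleform} and assemble three facts about $v_{b^*}'$ on $[b^*,\infty)$: (D) $v_{b^*}'(b^*)=1$, immediate since the integral in \eqref{simpleform} is empty at $x=b^*$; (C) $\lim_{x\to\infty}v_{b^*}'(x)=0$; and (B) $v_{b^*}''$ changes sign at most once on $(b^*,\infty)$, and if it does so then from negative to positive. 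Granting (B)--(D), the function $v_{b^*}'$ is either nonincreasing on $[b^*,\infty)$ or decreasing then increasing; in the first case $v_{b^*}'\le v_{b^*}'(b^*)=1$, and in the second $v_{b^*}'\le\max\{v_{b^*}'(b^*),\lim_{x\to\infty}v_{b^*}'(x)\}=\max\{1,0\}=1$. Either way \eqref{equiv_inequality2} follows. (The orientation in (B) is consistent with $W^{(q)\prime\prime}(b^*)<0$, which makes $v_{b^*}''(b^*+)<0$.)

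For (C) the key is that the leading exponential growth in \eqref{simpleform} cancels. Inserting $\mathbb{W}^{(q)}(u)=\varphi'(q)\mathrm e^{\varphi(q)u}-\tilde f(u)$ into $v_{b^*}'(x)W^{(q)\prime}(b^*)=W^{(q)\prime}(x)+\delta\int_{b^*}^x\mathbb{W}^{(q)}(x-y)W^{(q)\prime\prime}(y)\mathrm dy$, the coefficient of $\mathrm e^{\varphi(q)x}$ is a multiple of $\int_{b^*}^\infty\mathrm e^{-\varphi(q)y}W^{(q)\prime\prime}(y)\mathrm dy$, which, after an integration by parts and using \eqref{hb}, equals $\mathrm e^{-\varphi(q)b^*}\bigl(h(b^*)-W^{(q)\prime}(b^*)\bigr)=0$ by the defining property of $b^*$. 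A further estimate of the remaining subexponential terms, again through the completely monotone parts of $W^{(q)}$ and $\mathbb{W}^{(q)}$, then pins the limit at $0$ (for the argument one in fact only needs $\le 1$).

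The crux, and the step I expect to be by far the hardest, is (B). Here I would substitute the decompositions of $W^{(q)}$ and $\mathbb{W}^{(q)}$ into the expression \eqref{secondderiv} for $v_{b^*}''$ and, using the cancellation $\int_{b^*}^\infty\mathrm e^{-\varphi(q)y}W^{(q)\prime\prime}(y)\mathrm dy=0$ from (C), rewrite $W^{(q)\prime}(b^*)v_{b^*}''(x)$ as a constant multiple of $\mathrm e^{\Phi(q)x}$ plus completely monotone contributions. The delicate feature is the convolution $\delta\int_{b^*}^x\mathbb{W}^{(q)\prime}(x-y)W^{(q)\prime\prime}(y)\mathrm dy$: because $b^*<a^*$, the factor $W^{(q)\prime\prime}(y)$ is strictly negative on $(b^*,a^*)$ but positive on $(a^*,\infty)$, so it is not of one sign over the range of integration, and it is precisely here that the completely monotone structure must be exploited. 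The objective is to exhibit $\mathrm e^{-\Phi(q)x}W^{(q)\prime}(b^*)v_{b^*}''(x)$ (or a comparable normalisation) as a monotone function, so that $v_{b^*}''$ can vanish at most once, with the sign pattern forced by $v_{b^*}''(b^*+)<0$. Controlling the sign changes of this non-sign-definite convolution via Lemma \ref{compmon} is the technical heart of the argument.
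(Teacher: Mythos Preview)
Your treatment of the region $0<x\le b^*$ and of the case $b^*=0$ matches the paper and is fine. The gap is in step (B) for $b^*>0$: you state the goal (at most one sign change of $v_{b^*}''$) and sketch a route (normalise by $\mathrm e^{-\Phi(q)x}$ and hope for monotonicity), but you do not carry it out, and the proposed normalisation is not obviously the right one. Working directly from \eqref{simpleform}/\eqref{secondderiv} you face the convolution $\int_{b^*}^x \mathbb W^{(q)\prime}(x-y)W^{(q)\prime\prime}(y)\,\mathrm dy$ over a growing interval where $W^{(q)\prime\prime}$ changes sign at $a^*>b^*$; it is not clear how a single exponential rescaling tames this into a monotone function, and you offer no mechanism beyond ``the completely monotone structure must be exploited''.

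The paper takes a genuinely different route that avoids this difficulty. It first uses an identity for $\delta\int_0^x\mathbb W^{(q)\prime}(x-y)W^{(q)\prime}(y)\,\mathrm dy$ to rewrite $v_{b^*}'(x)$ so that the only integral is over the \emph{fixed} interval $(0,b^*)$, where $W^{(q)\prime}(y)\ge W^{(q)\prime}(b^*)$. After substituting $\mathbb W^{(q)}(x)=\varphi'(q)\mathrm e^{\varphi(q)x}-f(x)$ and using $h(b^*)=W^{(q)\prime}(b^*)$, \emph{all} exponential growth cancels (not just the $\mathrm e^{\varphi(q)x}$ piece), leaving $v_{b^*}'(x)=\int_0^\infty \mathrm e^{-xt}g(t)\,\mu(\mathrm dt)$ via Bernstein's theorem, with an explicit $g$. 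A direct computation (using only $W^{(q)\prime}(y)\ge W^{(q)\prime}(b^*)$ on $(0,b^*)$) shows $g$ is concave with $g(0)=\delta>0$, so $g$ has a single sign change at some $p_2$, and comparing $\mathrm e^{-(x-b^*)t}g(t)$ with $\mathrm e^{-(x-b^*)p_2}g(t)$ yields $v_{b^*}''(x)\le \mathrm e^{-(x-b^*)p_2}v_{b^*}''(b^*+)\le 0$ for all $x>b^*$. This is actually stronger than your (B): $v_{b^*}'$ is monotone decreasing on $(b^*,\infty)$, so neither your (B) nor your (C) is needed once one has this representation. The two ingredients you are missing are the integral-over-$(0,b^*)$ rewrite and the Bernstein/concavity analysis of the kernel $g$.
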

\begin{proof}[\textbf{Proof}]
Suppose first that $b^*=0$. In other words, from Lemma \ref{b*}, assume that either
\begin{itemize}
\item[(i)] $\sigma>0$ and $\varphi(q)\geq 2\delta/\sigma^2$, or
\item[(ii)] $\sigma = 0$, $\nu(0,\infty)<\infty$ and $\varphi(q)\geq \delta(q+\nu(0,\infty))/c(c-\delta)$.
\end{itemize}
 Then for $x>0$, we deduce from (\ref{v0}) that
\begin{equation*}
v_{b^*}'(x)= v_{0}'(x)=-\delta \left( \mathbb W^{(q)}(x)  - \frac1{\varphi(q)} \mathbb W^{(q)\prime}(x) \right).
\end{equation*}
By the decomposition of the scale function given in Lemma \ref{compmon}, $v_0'$ is completely monotone  and thus in particular decreasing on $(0,\infty)$.  
Hence if $b^*=0$, it is enough to show that $v_{0}'(0+)\leq1$ or equivalently
\begin{equation}
\label{B}
\frac{\delta\mathbb W^{(q)\prime}(0+)}{1+\delta \mathbb W^{(q)}(0)}\leq \varphi(q).
\end{equation}
Taking account of Lemma \ref{prop:Wq0} we see that this requirement is automatically satisfied in cases (i) and (ii).
Hence we have proved  \eqref{equiv_inequality2} if $b^*=0$.

\bigskip

Assume now $b^*>0$. Then for $x\leq b^*$, $v_{b^*}'(x)=W^{(q)\prime}(x)/W^{(q)\prime}(b^*)$. From this it follows that $v_{b^*}'(x)\leq 1$ since, by Lemma \ref{b*}, $b^*\leq a^*$ and, by Lemma \ref{compmon}, $W^{(q)\prime}$ is decreasing for $x\leq b^*$.


\bigskip

Suppose now $x>b^*$. Differentiating twice the first displayed equation in Section 8 of \cite{kyploeffen} gives us the identity
\begin{equation*}
\delta\int_0^x \mathbb W^{(q)\prime}(x-y)W^{(q)\prime}(y)\mathrm{d}y = (1- \delta W^{(q)}(0))\mathbb W^{(q)\prime}(x) - (1+\delta\mathbb W^{(q)}(0))W^{(q)\prime}(x).
\end{equation*}
Hence revisiting (\ref{dervaluethresh}) we obtain the expression
\begin{equation*}
v_{b^*}'(x)=-{\delta}\mathbb W^{(q)}(x-b^*)+\frac{(1- \delta W^{(q)}(0))\mathbb W^{(q)\prime}(x)-\delta\int_0^{b^*} \mathbb W^{(q)\prime}(x-y)W^{(q)\prime}(y)\mathrm{d}y }{W^{(q)\prime}(b^*)}.
\end{equation*}
Appealing to  Lemma \ref{compmon} and writing $\mathbb{W}^{(q)}(x)=\varphi'(q)\mathrm{e}^{\varphi(q)x}-f(x)$ where $f$ is completely monotone we get
\begin{equation*}
\begin{split}
v_{b^*}'(x)= & {\delta}  f(x-b^*)+\frac{-(1- \delta W^{(q)}(0))f'(x)+\delta\int_0^{b^*} f'(x-y)W^{(q)\prime}(y)\mathrm{d}y }{W^{(q)\prime}(b^*)}  -\delta\varphi'(q)\mathrm{e}^{\varphi(q)(x-b^*)} \\
&+\frac1{W^{(q)\prime}(b^*)} \Big( (1- \delta W^{(q)}(0))\varphi'(q)\varphi(q)\mathrm{e}^{\varphi(q)x}  -\delta\int_0^{b^*} \varphi'(q)\varphi(q)\mathrm{e}^{\varphi(q)(x-y)}W^{(q)\prime}(y)\mathrm{d}y  \Big).
\end{split}
\end{equation*}
Using (\ref{IBP}) and recalling that $\varphi(q)>\Phi(q)$ we also have that
\begin{equation*}
\begin{split}
\int_0^{b^*}  \mathrm{e}^{-\varphi(q)y }W^{(q)\prime}(y)\mathrm{d}y = & -\int_{b^*}^\infty  \mathrm{e}^{-\varphi(q)y }W^{(q)\prime}(y)\mathrm{d}y + \int_0^\infty  \mathrm{e}^{-\varphi(q)y }W^{(q)\prime}(y)\mathrm{d}y \\
= & -\int_{b^*}^\infty  \mathrm{e}^{-\varphi(q)y }W^{(q)\prime}(y)\mathrm{d}y +   \frac{1}{\delta}-W^{(q)}(0).
\end{split}
\end{equation*}
Hence this gives us
\begin{equation*}
\begin{split}
v_{b^*}'(x)
= & {\delta}  f(x-b^*)+\frac{-(1- \delta W^{(q)}(0))f'(x)+\delta\int_0^{b^*} f'(x-y)W^{(q)\prime}(y)\mathrm{d}y }{W^{(q)\prime}(b^*)}  \\
& -\delta\varphi'(q)\mathrm{e}^{\varphi(q)(x-b^*)}
+ \frac{\delta\mathrm{e}^{\varphi(q)x} \varphi'(q)\varphi(q)  \int_{b^*}^\infty \mathrm{e}^{-\varphi(q)y}W^{(q)\prime}(y)\mathrm{d}y   }{W^{(q)\prime}(b^*)}
\end{split}
\end{equation*}
and recalling that
\begin{equation*}
W^{(q)\prime}(b^*) = h(b^*)=\varphi(q)\mathrm{e}^{\varphi(q)b^*}\int_{b^*}^\infty\mathrm{e}^{-\varphi(q)y}W^{(q)\prime}(y)\mathrm{d}y,
\end{equation*}
we get the simpler expression
\begin{equation*}
\begin{split}
v_{b^*}'(x)
= & {\delta}  f(x-b^*)+\frac{-(1- \delta W^{(q)}(0))f'(x)+\delta\int_0^{b^*} f'(x-y)W^{(q)\prime}(y)\mathrm{d}y }{W^{(q)\prime}(b^*)}.
\end{split}
\end{equation*}
Since $f$ is completely monotone, by Bernstein's theorem, it can be written in the form $f(x)=\int_0^\infty \mathrm{e}^{-xt}\mu(\mathrm{d}t)$ for some measure $\mu$. Therefore using Tonelli's Theorem, we come to rest at the identity
\begin{equation*}
v_{b^*}'(x)
= \int_0^\infty \mathrm{e}^{-xt}  \left\{ \delta\mathrm{e}^{b^*t} + \frac{1- \delta W^{(q)}(0)}{W^{(q)\prime}(b^*)} t - \frac{\delta}{W^{(q)\prime}(b^*)}\int_0^{b^*} t\mathrm{e}^{yt}W^{(q)\prime}(y)\mathrm{d}y \right\} \mu(\mathrm{d}t).
\end{equation*}

Denote for $t>0$ by $g(t)$ the expression between curly brackets. We have
\[
g''(t)=\delta (b^*)^2\mathrm{e}^{b^*t}    - \frac{2\delta}{W^{(q)\prime}(b^*)}\int_0^{b^*}  y\mathrm{e}^{yt}W^{(q)\prime}(y)\mathrm{d}y
- \frac{\delta  t}{W^{(q)\prime}(b^*)}\int_0^{b^*}y^2\mathrm{e}^{yt}W^{(q)\prime}(y)\mathrm{d}y.
\]
Since $W^{(q)\prime}(y)\geq W^{(q)\prime}(b^*)$ for $y\in(0,b^*)$, we have using also an integration by parts,
\begin{equation*}
\begin{split}
\frac{\delta  t}{W^{(q)\prime}(b^*)}\int_0^{b^*}y^2\mathrm{e}^{yt}W^{(q)\prime}(y)\mathrm{d}y \geq & \delta  t\int_0^{b^*}y^2\mathrm{e}^{yt} \mathrm{d}y
= \delta y^2\mathrm{e}^{yt}|_0^{b^*} - 2\delta\int_0^{b^*} y\mathrm{e}^{yt}\mathrm{d}y
\end{split}
\end{equation*}
and hence using again that $W^{(q)\prime}(y)\geq W^{(q)\prime}(b^*)$ for $y\in(0,b^*)$,
\begin{equation*}
g''(t)\leq      - \frac{2\delta}{W^{(q)\prime}(b^*)}\int_0^{b^*}  y\mathrm{e}^{yt}W^{(q)\prime}(y)\mathrm{d}y
+ 2\delta\int_0^{b^*} y\mathrm{e}^{yt}\mathrm{d}y \leq0.
\end{equation*}

In conclusion, $g$ is a concave function and in particular there exists $0\leq p_1\leq\infty$ such that $g$ is  increasing on $(0,p_1)$ and decreasing on $(p_1,\infty)$. Since $g(0)=\delta$, it follows that there exists $0<p_2\leq\infty$ such that $g$ is  positive on $(0,p_2)$ and negative on $(p_2,\infty)$. Consequently $\mathrm{e}^{-(x-b^*)t}g(t)\geq \mathrm{e}^{-(x-b^*)p_2}g(t)$ for all $t>0$ and thus (noting that we are allowed to switch the derivative and the integral)
\begin{equation}
\label{concavity}
\begin{split}
v_{b^*}''(x)=   -\int_0^\infty \mathrm{e}^{-(x-b^*)t}\mathrm{e}^{-b^*t}tg(t)\mathrm{d}t
 \leq -\mathrm{e}^{-(x-b^*)p_2}\int_0^\infty \mathrm{e}^{-b^*t}tg(t)\mathrm{d}t = \mathrm{e}^{-(x-b^*)p_2} v_{b^*}''(b^*+).
\end{split}
\end{equation}
From (\ref{secondderiv})
we easily deduce that
\begin{equation*}
v_{b^*}''(b^*+) = \frac{ (1+\delta \mathbb W^{(q)}(0)) W^{(q)\prime\prime}(b^*)} {W^{(q)\prime}(b^*)}\leq0
\end{equation*}
where the inequality is a result of the fact that  $b^*<a^*$ and hence by Lemma \ref{compmon}, $W^{(q)\prime\prime}(b^*)\leq 0$. In combination with \eqref{concavity}, it follows that $v_{b^*}'$ is decreasing on $(b^*,\infty)$ and since  $v_{b^*}'(b^*)=1$, we deduce that $v_{b^*}'(x)\leq 1$ for $x>b^*$ as required.
\end{proof}

\bigskip

Finally we can put all the pieces together to establish our main result.

\begin{proof}[\textbf{Proof of Theorem \ref{main}}]
 Simply combine  Lemmas \ref{verificationlemma}, \ref{suffsmooth}, \ref{tussenlemma}  and \ref{cruciallemma}.
\end{proof}

\begin{remark}\label{counterexample}\rm
Here we give the example mentioned at the end of Section \ref{introduction}. Let $\nu(\mathrm{d}z)=10z\mathrm{e}^{-z}\mathrm{d}z$, $\gamma=20.67-\int_0^1z\nu(\mathrm{d}z)$ and $q=0.1$. That is, $X$ is a Cram\'er-Lundberg risk process with premium rate $20.67$, jump arrival rate $10$ and with a Gamma$(2,1)$ claim distribution. The scale function for $X$, via the method of partial fraction, is given by
\begin{equation*}
W^{(q)}(x)=\sum_{i=1}^3 D_i\mathrm{e}^{\theta_i x}, \quad x\geq0,
\end{equation*}
where $\{\theta_i: i=1,2,3\}$ are the (distinct) roots of
$\lambda\mapsto\psi(\theta) -q$
with $\theta_1>0$ and $\theta_2,\theta_3<0$ and where $\{D_i: i=1,2,3\}$ are given by $D_i=1/\psi'(\theta_i)$. Similarly, the scale function of $Y$ can be calculated. If one plots the second derivative of $W^{(q)}$, then one  sees that $W^{(q)\prime\prime}(x)>0$ on $(0,\infty)$ and therefore by Theorem 2 of Loeffen \cite{loeffen1}, the reflection strategy at the barrier $0$ is optimal for the control problem with general admissible dividend strategies. If we now consider the case with absolutely continuous dividend strategies and choose the upper bound $\delta$ equal to $20.59$, then one can check that the function $h$ is strictly increasing and thus $v_0(0)=W^{(q)}(0)/h(0)>W^{(q)}(0)/h(b)=v_b(0)$ for all $b>0$. This means that the only refraction strategy that can be optimal is the one with threshold level equal to zero. However, one can calculate that $v_0'(3.15)=1.0005>1$ and so $v_0$ does not satisfy \eqref{equiv_inequality2} and consequently by Lemma \ref{tussenlemma}, $
 v_0$ does not satisfy \eqref{HJB-inequality}. Since we are in the Cram\'er-Lundberg setting with a continuous claim distribution, we can deduce from Theorem 2.32 of Schmidli \cite{schmidlibook} that the optimal value function $v_*$ has to satisfy \eqref{HJB-inequality} and therefore $v_0\neq v_*$. We conclude that for this particular example we have the remarkable property  that the optimal strategy in the case with no extra restrictions on the controls, is to always pay out the maximum amount of dividends that is allowed, whereas in the restricted case with this particular value of $\delta$, it is not optimal to always pay out dividends at the maximum rate. Further, this example shows that  Theorem 2 of \cite{loeffen1} does not carry over to the control problem considered in this paper.
\end{remark}

On a final note, Lemma \ref{b*} can also be re-worded as  the following corollary, giving a characterisation of the optimal threshold.

\begin{corol}\label{corol}
The threshold for the optimal refraction strategy in Theorem \ref{main} is characterised as the unique point  $b^* \in [0,a^*)$ such that
\[
b^*=\sup\{b\geq0: h(b)\leq h(x) \ \text{for all $x\geq0$}\}
\]
where
\[
h(b)=\varphi(q)\mathrm{e}^{\varphi(q)b}\int_b^\infty \mathrm{e}^{-\varphi(q)y}W^{(q)\prime}(y)\mathrm{d}y.
\]
Moreover $b^*>0$ if and only if
\begin{description}
\item[(i)] $\sigma>0$ and $\varphi(q)<2\delta/\sigma^2$,
\item[(ii)] $\sigma = 0$, $\nu(0,\infty)<\infty$ and $\varphi(q)<\delta(\nu(0,\infty) + q)/c(c-\delta)$ or
\item[(iii)] $\sigma = 0$ and $\nu(0,\infty)=\infty$.
\end{description}
otherwise $b^*=0$.
\end{corol}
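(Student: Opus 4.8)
The plan is to recognise that the corollary merely repackages results already in hand, so the proof reduces to tracing the relevant threshold through the verification argument and then quoting Lemma~\ref{b*}. First I would recall how Theorem~\ref{main} was established: Lemmas~\ref{suffsmooth}, \ref{tussenlemma} and \ref{cruciallemma} together show that the value function $v_{b^*}$ of the refraction strategy at the specific level $b^*$, defined just before Lemma~\ref{b*} as $b^*=\sup\{b\geq0: h(b)\leq h(x)\text{ for all }x\geq0\}$, satisfies the Hamilton--Jacobi--Bellman inequality \eqref{HJB-inequality}. Lemma~\ref{verificationlemma} then forces $v_{b^*}=v_*$, so the optimal refraction strategy delivered by Theorem~\ref{main} is precisely the one at threshold $b^*$. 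The explicit formula for $h$ appearing in the statement is just the first equality in \eqref{hb}.

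Next I would invoke Lemma~\ref{b*} essentially verbatim. Under the standing assumption that $\nu$ has a completely monotone density, that lemma already asserts that $b^*\in[0,a^*)$ and that $b^*$ is the unique point at which $h$ attains its minimum, which is exactly the first assertion of the corollary. The three conditions (i)--(iii) equivalent to $b^*>0$, and hence the complementary statement that $b^*=0$ when none of them holds, are likewise precisely the content of the second half of Lemma~\ref{b*}; there they are read off from the fact, recorded in \eqref{relationhscale}, that $b^*$ is the single crossing point of $h$ over $W^{(q)\prime}$, so that $b^*>0$ if and only if $h(0)<W^{(q)\prime}(0+)$, which is then matched against the boundary values of $W^{(q)\prime}(0+)$ in Lemma~\ref{prop:Wq0}.

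Since all the analytic work has already been carried out, I do not anticipate any genuine obstacle. The only point deserving care is the identification step: one must confirm that the symbol $b^*$ used in the verification lemmas coincides with the minimiser of $h$ studied in Lemma~\ref{b*}. This holds by construction, as $b^*$ is defined once and used consistently throughout, so the corollary follows at once by combining Lemma~\ref{b*} with the proof of Theorem~\ref{main}.
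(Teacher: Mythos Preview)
Your proposal is correct and matches the paper's own treatment: the paper explicitly states just before the corollary that ``Lemma~\ref{b*} can also be re-worded as the following corollary,'' and gives no further proof. Your argument is if anything more careful than the paper's, since you explicitly trace the identification of the optimal threshold in Theorem~\ref{main} with the minimiser $b^*$ studied in Lemma~\ref{b*}.
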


\section*{Appendix: proof of Lemma \ref{verificationlemma}}
By definition of $v_*$, it follows that $v_{\hat{\pi}}(x)\leq v_*(x)$ for all $x\geq0$. We write $w:=v_{\hat{\pi}}$ and show that $w(x)\geq v_\pi(x)$ for all $\pi\in\Pi$ for all $x\geq0$. First we suppose $x>0$. We define for $\pi\in\Pi$ the stopping time $\sigma^\pi_0$ by $\sigma_0^\pi=\inf\{t>0:U_t^\pi\leq0\}$
and denote by $\Pi_0$ the following set of admissible dividend strategies
\begin{equation*}
\Pi_0=\{\pi\in\Pi:  v_\pi(x)= \mathbb{E}_x\left[\int_0^{\sigma_0^\pi}\mathrm{e}^{-qt}\mathrm{d}L_t^\pi \right] \ \text{for all $x>0$}\}.
\end{equation*}
We claim that any $\pi\in\Pi$ can be approximated by dividend strategies from $\Pi_0$ in the sense that for all $\epsilon>0$ there exists $\pi_{\epsilon}\in\Pi_0$ such that $v_\pi(x)\leq v_{\pi_\epsilon}(x)+\epsilon$ and therefore it is enough to show that $w(x)\geq v_\pi(x)$ for all $\pi\in\Pi_0$. Indeed, we can take $\pi_\epsilon$ to be the strategy where you do not pay out any dividends until the stopping time $\kappa:=\inf\{t>0:L_t^\pi\geq\epsilon\}$, then from that time point $\kappa$ onwards follow the same strategy as $\pi$ until ruin occurs for the latter strategy at which point you stop paying out dividends. Note that $\pi_\epsilon\in\Pi_0$ because if $\sigma_0^{\pi_\epsilon}<\kappa$, then $\sigma_0^{\pi_\epsilon}=\sigma^{\pi_\epsilon}$ since until the first dividend payment is made, the process $U^{\pi_\epsilon}$ is equal to $X$ and for the spectrally negative L\'evy process $X$, the first entry time in $(-\infty,0]$ is equal almost surely to the first entry time in
  $(-\infty,0)$, provided $X_0>0$. Further if $\sigma_0^{\pi_\epsilon}\geq\kappa$ and $\kappa<\infty$, then $\sigma_0^{\pi_\epsilon}\geq \sigma^\pi$ and so by construction there are no dividends paid out in the time interval $(\sigma_0^{\pi_\epsilon},\sigma^{\pi_\epsilon})$.

We now assume without loss of generality that $\pi\in\Pi_0$. Let $(T_n)_{n\in\mathbb{N}}$ be the sequence of stopping times defined by $T_n=\inf\{t>0:{U}^\pi_t>n \ \text{or} \ {U}^\pi_t<\frac{1}{n}\}$.
Since ${U}^\pi$ is a semi-martingale and $w$ is sufficiently smooth, we can use the the change of variables/It\^o's formula (cf. \cite{protter}*{Theorem II.31 \& II.32})  on $\mathrm{e}^{-q(t\wedge T_n)}w({U}^\pi_{t\wedge T_n})$ to deduce under $\mathbb{P}_x$,
\begin{equation*}
\label{impulse_verif_1}
\begin{split}
\mathrm{e}^{-q(t\wedge T_n)}w({U}^\pi_{t\wedge T_n})-w(x)
= & -\int_{0+}^{t\wedge T_n}\mathrm{e}^{-qs} q w({U}^\pi_{s-}) \mathrm{d}s
+\int_{0+}^{t\wedge T_n}\mathrm{e}^{-qs}w'({U}^\pi_{s-}) \mathrm{d}  ( X_s - {L}^\pi_s  )  \\
 & + \sum_{0<s\leq t\wedge T_n}\mathrm{e}^{-qs}[\Delta w({U}^\pi_{s-}+\Delta X_s)-w'({U}^\pi_{s-})  \Delta X_s  ],
\end{split}
\end{equation*}
where we use the following notation: $\Delta {U}^\pi_s={U}^\pi_s-{U}^\pi_{s-}$, $\Delta w({U}^\pi_s)=w({U}^\pi_s)-w({U}^\pi_{s-})$.
Rewriting the above equation leads to
\begin{equation*}
\begin{split}
\mathrm{e}^{-q(t\wedge T_n)}w({U}^\pi_{t\wedge T_n}) & -w(x)
\\
= &  \int_{0+}^{t\wedge T_n}\mathrm{e}^{-qs}   (\Gamma-q)w({U}^\pi_{s-})   \mathrm{d}s
 -\int_{0+}^{t\wedge T_n}\mathrm{e}^{-qs}w'({U}^\pi_{s-})\mathrm{d}{L}^\pi_s   \\
 & + \Bigg\{ \int_{0+}^{t\wedge T_n}\mathrm{e}^{-qs}w'({U}^\pi_{s-})\mathrm{d}  [ X_s-(c-\int_0^1x\nu(\mathrm{d}x)) s-\sum_{0<u\leq s}\Delta X_u\mathbf{1}_{\{ |\Delta X_u| >1\}} ]   \Bigg\} \\
 & + \Bigg\{ \sum_{0<s\leq t\wedge T_n}\mathrm{e}^{-qs}[\Delta w({U}^\pi_{s-}+\Delta X_s)-w'({U}^\pi_{s-})\Delta X_s\mathbf{1}_{\{ |\Delta X_s| \leq1\}} ]\\
 & -   \int_{0+}^{t\wedge T_n}\int_{0+}^{\infty}\mathrm{e}^{-qs}\left[w({U}^\pi_{s-}-y)-w({U}^\pi_{s-})+w'({U}^\pi_{s-})y\mathbf{1}_{\{0<y\leq1\}}\right]\nu(\mathrm{d}y)\mathrm{d}s \Bigg\}.
\end{split}
\end{equation*}
By the L\'evy-It\^o decomposition the expression between the first pair of curly brackets is a zero-mean martingale and by the compensation formula (cf. \cite{kypbook}*{Corollary 4.6}) the expression between the second pair of curly brackets is also a zero-mean martingale. Hence we derive at
\begin{equation*}
\begin{split}
w(x) = &
 -\int_{0}^{t\wedge T_n}\mathrm{e}^{-qs}  \left[ (\Gamma-q)w({U}^\pi_{s-})-\ell^\pi(s)w'({U}^\pi_{s-})+ \ell^\pi(s) \right]  \mathrm{d}s \\
 & + \int_{0}^{t\wedge T_n}\mathrm{e}^{-qs}\ell^\pi(s)\mathrm{d}s + \mathrm{e}^{-q(t\wedge T_n)}w({U}^\pi_{t\wedge T_n}) + M_t,
\end{split}
\end{equation*}
where $\{M_t:t\geq0\}$ is a zero-mean $\mathbb{P}_x$-martingale.
Using  $w\geq 0$ and \eqref{HJB-inequality} leads to (since $0\leq \ell^\pi_s\leq\delta$)
\begin{equation*}
\begin{split}
w(x) \geq &
 \int_{0}^{t\wedge T_n}\mathrm{e}^{-qs}\ell^\pi(s)\mathrm{d}s  + M_t.
\end{split}
\end{equation*}

Now taking expectations and
letting $t$ and $n$ go to infinity and using the monotone convergence theorem we get, noting that $T_n\nearrow\sigma_0^\pi$ $\mathbb{P}_x$-a.s. and that $\pi\in\Pi_0$,
\begin{equation*}
w(x) \geq \mathbb{E}_x \left[ \int_{0}^{\sigma^\pi}\mathrm{e}^{-qs}\ell^\pi(s)\mathrm{d}s \right] =v_\pi(x).
\end{equation*}
Hence we proved $w(x)\geq v_*(x)$ for all $x>0$.

To finish the proof, note that $v_*$ is an increasing function (in the weak sence) and hence because $w$ is right-continuous at zero, $v_*(0)\leq\lim_{x\downarrow0}v_*(x)\leq\lim_{x\downarrow0}w(x)=w(0)$.


\begin{bibdiv}
\begin{biblist}


\bib{albrecherthonhauseroverview}{article}{
   author={Albrecher, Hansj{\"o}rg},
   author={Thonhauser, Stefan},
   title={Optimality results for dividend problems in insurance},
   journal={Rev. R. Acad. Cienc. Exactas F\'\i s. Nat. Ser. A Mat. RACSAM},
   volume={103},
   date={2009},
   number={2},
   pages={295--320},
}

\bib{asmussentaksar}{article}{
   author={Asmussen, S.},
   author={Taksar, M.},
   title={Controlled diffusion models for optimal dividend pay-out},
   journal={Insurance Math. Econom.},
   volume={20},
   date={1997},
   number={1},
   pages={1--15},
}
	
\bib{avanzi}{article}{
author = {Avanzi, Benjamin},
title = {Strategies for dividend distribution: a review},
journal = {North American Actuarial Journal},
    VOLUME = {13},
      date = {2009},
    NUMBER = {2},
     PAGES = {217--251},
}

\bib{palmdiv}{article}{
author={Avram, F.},
author={Palmowski, Z.},
author={Pistorius, M.R.},
title={On the optimal dividend problem for a spectrally negative L\'evy process},
journal={Annals of Applied Probability},
date={2007},
volume={17},
pages={156-180}
}

\bib{azcue}{article}{
author={Azcue, P.},
author={Muler, N.},
title={Optimal reinsurance and dividend distribution policies in the Cram\'er-Lundberg model},
journal={Mathematical Finance},
date={2005},
volume={15},
pages={261-308}
}


\bib{bayraktar}{article}{
   author={Bayraktar, Erhan},
   author={Egami, Masahiko},
   title={Optimizing venture capital investments in a jump diffusion model},
   journal={Math. Methods Oper. Res.},
   volume={67},
   date={2008},
   number={1},
   pages={21--42},
}

\bib{belhaj}{article}{
   author={Belhaj, Mohamed},
   title={Optimal dividend payments when cash reserves follow a
   jump-diffusion process},
   journal={Math. Finance},
   volume={20},
   date={2010},
   number={2},
   pages={313--325},
}

\bib{biffiskyp}{article}{
   author={Biffis, E.},
   author={Kyprianou, A. E.},
   title={A note on scale functions and the time value of ruin for L\'evy
   insurance risk processes},
   journal={Insurance Math. Econom.},
   volume={46},
   date={2010},
   number={1},
   pages={85--91},
}

\bib{bog}{book}{
author={Boguslavaskaya, E.},
title={Optimization problems in financial mathematics: Explicit solutions for diffusion models},
publisher={Ph.D. Thesis, University of Amsterdam},
date={2006},
}

\bib{definetti}{article}{
author={de Finetti, B.},
title={Su un'impostazione alternativa dell teoria collecttiva del rischio},
journal={Transactions of the XVth International Congress of Actuaries},
date={1957},
volume={2},
pages={433-443}
}

\bib{chankypsavov}{article}{
author={Chan, T.},
author={Kyprianou, A.E.},
author={Savov, M.},
title={Smoothness of scale functions for spectrally negative L\'evy processes},
journal={http://arxiv.org/abs/0903.1467},
date={2010},
}

\bib{DK2006}{article}{
   author={Doney, R. A.},
   author={Kyprianou, A. E.},
   title={Overshoots and undershoots of L\'evy processes},
   journal={Ann. Appl. Probab.},
   volume={16},
   date={2006},
   number={1},
   pages={91--106},
   issn={1050-5164},
}


%
%
%

\bib{gerber69}{article}{
author={Gerber, H.U.},
title={Entscheidungskriterien f\"ur den zusammengesetzten Poisson-Prozess},
journal={Mitteilungen der Vereinigung Schweizerischer Versicherungsmathematiker},
date={1969},
volume={69},
pages={185-227}
}


\bib{gerbershiucompound}{article}{
   author={Gerber, H. U.},
   author={Shiu, E. S. W.},
   title={On optimal dividend strategies in the compound Poisson model},
   journal={N. Am. Actuar. J.},
   volume={10},
   date={2006},
   number={2},
   pages={76--93},
}


\bib{huzak}{article}{
   author={Huzak, Miljenko},
   author={Perman, Mihael},
   author={{\v{S}}iki{\'c}, Hrvoje},
   author={Vondra{\v{c}}ek, Zoran},
   title={Ruin probabilities for competing claim processes},
   journal={J. Appl. Probab.},
   volume={41},
   date={2004},
   number={3},
   pages={679--690},
   issn={0021-9002},
}

\bib{huzak2}{article}{
   author={Huzak, Miljenko},
   author={Perman, Mihael},
   author={{\v{S}}iki{\'c}, Hrvoje},
   author={Vondra{\v{c}}ek, Zoran},
   title={Ruin probabilities and decompositions for general perturbed risk
   processes},
   journal={Ann. Appl. Probab.},
   volume={14},
   date={2004},
   number={3},
   pages={1378--1397},
   issn={1050-5164},
}

\bib{jeanblanc}{article}{
author={Jeanblanc-Picqu\'e, M.},
author={Shiryaev, A.N.},
title={Optimization of the flow of dividends},
journal={Russian Math. Surveys},
date={1995},
volume={50},
pages={257-277}
}

\bib{KKM2004}{article}{
   author={Kl{\"u}ppelberg, Claudia},
   author={Kyprianou, Andreas E.},
   author={Maller, Ross A.},
   title={Ruin probabilities and overshoots for general L\'evy insurance
   risk processes},
   journal={Ann. Appl. Probab.},
   volume={14},
   date={2004},
   number={4},
   pages={1766--1801},
   issn={1050-5164},
}

\bib{kypbook}{book}{
author={Kyprianou, A.E.},
title={Introductory lectures on fluctuations of L\'evy processes with applications},
publisher={Springer},
date={2006}
}

\bib{kyppalm}{article}{
author={Kyprianou, A.E.},
author={Palmowski, Z.},
title={Distributional study of de Finetti's dividend problem for a general L\'evy insurance risk process},
journal={Journal of Applied Probability},
date={2007},
volume={44},
pages={349-365}
}


\bib{kypriverosong}{article}{
author={Kyprianou, A.E.},
author={Rivero, V.},
author={Song, R.},
title={Convexity and smoothness of scale functions and de Finetti's control problem},
journal={Journal of Theoretical Probability},
date={2010},
volume={23},
pages={547-564}
}


\bib{kyploeffen}{article}{
   author={Kyprianou, A. E.},
   author={Loeffen, R. L.},
   title={Refracted L\'evy processes},
   journal={Ann. Inst. Henri Poincar\'e Probab. Stat.},
   volume={46},
   date={2010},
   number={1},
   pages={24--44},
}

\bib{loeffen1}{article}{
author={Loeffen, R. L.},
title={On optimality of the barrier strategy in de Finetti's dividend problem for spectrally negative L\'evy processes},
journal={Annals of Applied Probability},
date={2008},
volume={18},
number={5},
pages={1669-1680}
}

\bib{loeffen2}{article}{
   author={Loeffen, R. L.},
   title={An optimal dividends problem with a terminal value for spectrally
   negative L\'evy processes with a completely monotone jump density},
   journal={J. Appl. Probab.},
   volume={46},
   date={2009},
   number={1},
   pages={85--98},
}

\bib{loeffen3}{article}{
   author={Loeffen, R. L.},
   title={An optimal dividends problem with transaction costs for spectrally
   negative L\'evy processes},
   journal={Insurance Math. Econom.},
   volume={45},
   date={2009},
   number={1},
   pages={41--48},
}

\bib{loeffenrenaud}{article}{
   author={Loeffen, R. L.},
   author={Renaud, J.-F.},
   title={De Finetti's optimal dividends problem with an affine penalty
   function at ruin},
   journal={Insurance Math. Econom.},
   volume={46},
   date={2010},
   number={1},
   pages={98--108},
}

\bib{protter}{book}{
author={Protter, P.},
title={Stochastic integration and differential equations},
publisher={Springer},
date={2005},
edition={2nd ed., version 2.1},
}

\bib{radner}{article}{
author={Radner, R.},
author={Shepp, L.},
title={Risk vs. profit potential: a model for corporate strategy},
journal={Journal of Economic Dynamics and Control},
date={1996},
volume={20},
pages={1373-1393}
}


\bib{schmidlibook}{book}{
   author={Schmidli, Hanspeter},
   title={Stochastic control in insurance},
   series={Probability and its Applications (New York)},
   publisher={Springer-Verlag London Ltd.},
   place={London},
   date={2008},
}


\end{biblist}
\end{bibdiv}

\end{doublespace}
\end{document}